\newtheorem{thm}{Theorem}[section]
\newtheorem{cor}[thm]{Corollary}
\newtheorem{lem}[thm]{Lemma}
\newtheorem{prop}[thm]{Proposition}
\theoremstyle{definition}
\newtheorem{rem}[thm]{Remark}
\numberwithin{equation}{section}
\newcommand{\ZZ}{\mathbb Z}
\newcommand{\CC}{\mathbb C}
\newcommand{\PP}{\mathbb P}
\newcommand{\lra}{\longrightarrow}
\newcommand{\ra}{\rightarrow}
\DeclareMathOperator{\Ind}{{Ind}}
 \DeclareMathOperator{\Ker}{Ker}
 \DeclareMathOperator{\Nm}{{Nm}}
 \DeclareMathOperator{\Ima}{{Im}}
\begin{document}

\title[ ]{Etale double covers of cyclic p-gonal covers}
\author{Angel Carocca, Herbert Lange and Rub\'{\i} E. Rodr\'iguez}
\address{Departamento de Matem\'atica y Estad\'istica, Universidad de La Frontera, Avenida Francisco Salazar 01145, Casilla 54-D, Temuco, Chile.}
\email{angel.carocca@ufrontera.cl}
\address{Department Mathematik, Universit\"at Erlangen, Cauerstrasse 11, 91058 Erlangen, Germany.}
\email{lange@math.fau.de}
\address{Departamento de Matem\'atica y Estad\'istica, Universidad de La Frontera, Avenida Francisco Salazar 01145, Casilla 54-D, Temuco, Chile.}
\email{rubi.rodriguez@ufrontera.cl}

\thanks{Partially supported by Anillo ACT 1415 PIA-CONICYT}
\subjclass{14H40, 14H30}
\keywords{Jacobian, Prym variety, Coverings}%

\begin{abstract}
This paper computes the Galois group of the Galois cover of the composition of  an \'etale
double cover of a cyclic $p$-gonal cover for any prime $p$. Moreover a relation between some of its Prym
varieties and the Jacobian of a subcover is given. In a sense this generalizes the trigonal construction.

\end{abstract}

\maketitle

\section{Introduction}
In this paper we investigate the Galois group of the Galois cover of the composition of \'etale double
coverings $Y \ra X$ of cyclic covers $X \ra \PP^1$ of prime degree $p$. For $p = 2$, Mumford shows in
\cite{m} that $Y \ra \PP^1$ is Galois with Galois group the Klein group of order 4 and
the Prym variety $P(Y/X)$ is isomorphic as a principally polarized abelian variety to either a Jacobian or the product of 2 Jacobians.

For $p=3$, the trigonal construction tells us the principally polarized $P(Y/X)$ is isomorphic to a
Jacobian of a tetragonal curve. In Section 2 we study the Galois group of the Galois closure
$Z \ra \PP^1$ of $Y \ra \PP^1$ for an odd prime $p$. The main result of this section is\\

{\bf Theorem 2.6}. {\it Let $p$ be an odd prime, $Y \ra X$ an \'etale double
cover and $X \ra \PP^1$ a cyclic cover of degree $p$. Then $Y \ra \PP^1$ is not Galois. Denoting by $Z \ra \PP^1$ its Galois closure, its Galois group $G$ is
$$G = N \rtimes P
$$
where 
$ \; N \cong \ZZ_2^{p-1} \; $ and $   \; P \cong \ZZ_p, \; $   and $X = Z/N$, $Y=Z/H$, with $H$ a maximal subgroup of $N$.}\\

There are $2^{p-1} -1$ maximal subgroups of $N$.  The group $P$ acts on them by conjugation
and there are $m := \frac{1}{p}(2^{p-1} -1)$ conjugacy clases of such subgroups.\\
Let $\{ Y_i \ra X \;|\; i = 1, \dots, m \}$ be the corresponding double covers. It is easy to see that they are all \'etale.  If $T:= Z/P$, there is a natural homomorphism
$$
\alpha: \prod_{i=1}^m P(Y_i/X) \ra JT.
$$
 Our main result is:\\

{\bf Theorem 3.1}. {\it $\alpha: \displaystyle{\prod_{i=1}^m P(Y_i/X)} \ra JT$ is an isogeny with kernel in the $2^{p-2}$-division points}. \\

As an immediate consequence we get examples of Jacobians with arbitrarily many isogeny factors of the same dimension.
For $p=3$ this is not yet the trigonal construction, which, however, is an easy consequence, as we show in Section 4.

For the sake of completeness, we also consider the case $p = 2$, i.e., we give a proof of Mumford's
theorem mentioned above. Note that Mumford gives only a short sketch of proof leaving the details to
the reader. It seems to us that our proof is different from the one Mumford had in mind.

\section{\'Etale covers of cyclic $p$-gonal covers}

\subsection{The structure of the Galois group}
Let $p$ be a prime and $\varphi: X \ra \PP^1$ be a cyclic covering of degree $p$ ramified over
$\beta$ points of $\PP^1$, with $\beta \geq 3$. Observe that if $p=2$ then $\beta$ must be even.

Let $\psi:Y \ra X$ be an \'etale double cover and $\widetilde \varphi: Z \ra \PP^1$ the Galois closure of the
composed map $\varphi \circ \psi$.
Let $G$ denote the Galois group of $\widetilde \varphi$
and $H$ and $N$ the subgroups of $G$ corresponding to $Y$ and $X$. So we have the following commutative diagram
\begin{equation} \label{dia:dia1}
\xymatrix{
& Z \ar[dl] \ar[ddd]^{\widetilde \varphi}\\
Y = Z/H \ar[d]_{2:1}^\psi &\\
X = Z/N \ar[dr]^\varphi_{p:1} &\\
& \PP^1
}
\end{equation}
In this section we determine the structure of $G$.

\begin{lem}  \label{lem1.1}
The
permutation representation $\rho$ of of $G$ on the right cosets of $H$ in $G$ has its image in the
alternating group $A_{2p}$ of degree $2p$, and the non-trivial elements of $G$ fixing points in $Z$ have order $p$. Moreover, the representation $\rho: G \ra A_{2p}$ is injective.
\end{lem}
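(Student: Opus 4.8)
My plan is to work with the monodromy of the degree-$2p$ cover $\varphi\circ\psi\colon Y\to\PP^1$ and to deduce all three assertions from the cycle type of its local monodromy at the branch points. First I would pin down the ramification of $\varphi\circ\psi$. A cyclic cover of $\PP^1$ of prime degree $p$ is totally ramified over each of its $\beta$ branch points $b_1,\dots,b_\beta$, so over $b_i$ there is a unique point $x_i\in X$ with $e(x_i/b_i)=p$; as $\psi$ is \'etale of degree $2$, its fibre over $x_i$ consists of exactly two points, each of ramification index $p$ over $b_i$. Over every other point of $\PP^1$ both $\varphi$ and $\psi$ are unramified. Hence $\varphi\circ\psi$ --- and therefore also its Galois closure $Z\to\PP^1$ --- is branched precisely over $\{b_1,\dots,b_\beta\}$, and the fibre of $\varphi\circ\psi$ over each $b_i$ consists of two points of ramification index $p$. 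Equivalently, the local monodromy of $Y\to\PP^1$ at $b_i$ acts on the $2p$ sheets as a product of two disjoint $p$-cycles; in particular it is an \emph{even} permutation of order $p$.

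Next I would prove injectivity of $\rho$. Its kernel is the core $\bigcap_{g\in G}gHg^{-1}$ of $H$ in $G$, and the fixed field of this core is a subcover of $Z$ which is Galois over $\PP^1$ and still dominates $Y=Z/H$; by minimality of the Galois closure it must equal $Z$ itself. Hence the core is trivial and $\rho$ is injective.

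Finally I would choose standard generating loops around $b_1,\dots,b_\beta$ and let $\sigma_i\in G$ be a generator of a corresponding inertia group over $b_i$, so that $G=\langle\sigma_1,\dots,\sigma_\beta\rangle$, the inertia groups over $b_i$ are exactly the conjugates of $\langle\sigma_i\rangle$, and $\rho(\sigma_i)$ is the local monodromy at $b_i$ computed above. Then $\rho(G)=\langle\rho(\sigma_1),\dots,\rho(\sigma_\beta)\rangle\subseteq A_{2p}$, since each $\rho(\sigma_i)$ is even. Moreover $\rho(\sigma_i)$ has order $p$, so by injectivity $\sigma_i$ has order $p$ and every inertia group of $Z\to\PP^1$ is cyclic of order $p$. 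Since $Z\to\PP^1$ is unramified outside $\{b_1,\dots,b_\beta\}$, any non-trivial element of $G$ fixing a point of $Z$ lies in one of these order-$p$ inertia groups, and hence has order $p$.

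I do not expect a genuine obstacle here; the two points that need care are the fact that a prime-degree cyclic cover is totally ramified over its branch points (so that the fibre of $Y\to\PP^1$ over $b_i$ really is two points of index $p$, not one point of index $2p$), and the routine dictionary between inertia groups of the Galois closure $Z\to\PP^1$ and the local monodromy of $Y\to\PP^1$. The argument is uniform in the prime $p$: for $p=2$ the local monodromy is a product of two transpositions, which is again even and of order $2$.
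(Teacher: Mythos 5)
Your argument is correct and follows the same route as the paper: the key point in both is that total ramification of the prime-degree cyclic cover $\varphi$ plus the \'etale-ness of $\psi$ forces each local monodromy of $\varphi\circ\psi$ to be a product of two disjoint $p$-cycles, hence even and of order $p$, and that $G$ is generated by these elements. You merely make explicit what the paper dismisses as ``clear,'' namely the triviality of the core of $H$ (injectivity of $\rho$) and the identification of point stabilizers in $Z$ with the cyclic inertia groups of order $p$.
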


\begin{proof}
Recall that $Y \ra X$ is the double covering corresponding to the embedding $H \subset N$.
Since $\varphi$ is cyclic of prime degree, the local monodromy of each of its branch points is a cycle of
length $p$. Since $\psi$ is an \'etale double cover,  every local monodromy of $\varphi \circ \psi$
above a branch point is the product  of two
disjoint cycles of length $p$ and hence in $A_{2p}$. Since $G$ is generated by these products, this gives the first assertion.
The second assertion is clear.
\end{proof}

\begin{cor} \label{c1.2}
If $p =2$, the covering $\varphi \circ \psi$ is Galois with Galois group $G$ the Klein group of order $4$.
In particular $Z = Y$.
\end{cor}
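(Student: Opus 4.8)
The plan is to read off the conclusion from Lemma \ref{lem1.1} together with an elementary degree count. Since $\psi$ is étale of degree $2$ and $\varphi$ has degree $2$, the composition $\varphi\circ\psi:Y\ra\PP^1$ has degree $4$; as $Y=Z/H$ this gives $[G:H]=4$. By Lemma \ref{lem1.1} the right-coset representation $\rho:G\ra A_{2p}=A_4$ is injective, so $\rho$ identifies $G$ with a (necessarily transitive) subgroup of $A_4$ whose order is a multiple of $[G:H]=4$; in particular $|G|\in\{4,12\}$.

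Next I would determine which subgroup it is. As in the proof of Lemma \ref{lem1.1}, the branch locus of $\varphi\circ\psi$ coincides with that of $\varphi$ (because $\psi$ is unramified), $G$ is generated by the corresponding local monodromies, and for $p=2$ each of these is a product of two disjoint transpositions. In $A_4$ the double transpositions together with the identity already form a subgroup — namely the unique subgroup $V_4$ of order $4$, which is Klein four, since the product of two distinct double transpositions is the third. Hence $\rho(G)\subseteq V_4$, and combined with $|\rho(G)|=|G|\ge 4=|V_4|$ this forces $\rho(G)=V_4$, so $G\cong\ZZ_2^2$.

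Finally, $|G|=4$ and $[G:H]=4$ force $H=\{1\}$, so $Z=Z/H=Y$; thus $\widetilde\varphi=\varphi\circ\psi$, i.e. $\varphi\circ\psi$ is Galois, and its Galois group is $G$, the Klein four-group, which is the assertion.

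The argument is short and essentially forced by Lemma \ref{lem1.1}; the only point requiring a little care is the group-theoretic input that the subgroup of $A_4$ generated by its double transpositions is exactly $V_4$ (equivalently: among the two transitive subgroups $V_4$ and $A_4$ of $A_4$, only $V_4$ is generated by double transpositions), which is what rules out the alternative $G=A_4$.
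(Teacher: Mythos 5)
Your proof is correct and follows essentially the same route as the paper: apply Lemma \ref{lem1.1} to embed $G$ in $A_4$ as a subgroup generated by double transpositions, observe that these generate exactly the Klein group $V_4$, and conclude. You merely spell out the final step ($|G|=[G:H]=4$ forces $H=\{1\}$, hence $Z=Y$) that the paper leaves implicit.
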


\begin{proof}
According to Lemma \ref{lem1.1}, $G$ is a subgroup of $A_4$, generated by elements which are products of  two disjoint cycles of length $ 2$. Hence $ G $ is the Klein group of order four.
\end{proof}

\begin{prop} \label{pr1.3}
Suppose $p$ is an odd prime. Then the covering $\varphi \circ \psi: Y \ra \PP^1$ cannot be Galois.
\end{prop}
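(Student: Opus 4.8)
The plan is to argue by contradiction, using Lemma \ref{lem1.1} together with two standard facts: that a Galois covering is its own Galois closure, and that the Galois group of a connected Galois covering of $\PP^1$ branched over $\{b_1,\dots,b_\beta\}$ is generated by the local monodromies $\sigma_1,\dots,\sigma_\beta$ around the branch points.

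So suppose $\varphi\circ\psi\colon Y\ra\PP^1$ were Galois. Then $Z=Y$, the subgroup $H$ is trivial, and $G$ is a group of order $2p$ (the degree of $\varphi\circ\psi$). Since $p$ is odd, the number $n_p$ of Sylow $p$-subgroups of $G$ divides $2$ and satisfies $n_p\equiv 1\pmod p$, which forces $n_p=1$. Hence $G$ has a unique subgroup $M$ of order $p$, and $M$ is a proper subgroup of $G$; in particular every element of $G$ of order $p$ lies in $M$.

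Next I would examine the branch locus. Because $\psi$ is \'etale it introduces no new branching, so $\varphi\circ\psi$ is branched exactly over the $\beta\ge 3$ branch points of $\varphi$, and over each of them every point of $Y$ has ramification index $p$ (as in the proof of Lemma \ref{lem1.1}). In particular each local monodromy $\sigma_i$ is a non-trivial element of $G$ fixing a point of $Z=Y$, hence has order exactly $p$ by Lemma \ref{lem1.1}, and therefore $\sigma_i\in M$. But $G$ is generated by $\sigma_1,\dots,\sigma_\beta$, so $G=\langle\sigma_1,\dots,\sigma_\beta\rangle\subseteq M\subsetneq G$, which is absurd.

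There is no genuinely hard step; the only points needing (routine) attention are: checking that $\varphi\circ\psi$ really ramifies over each $b_i$, so that the $\sigma_i$ are non-trivial and Lemma \ref{lem1.1} applies (this holds because $\varphi$ is totally ramified there and $\psi$ is unramified), and invoking the surjectivity of the monodromy for a connected Galois cover. One can also bypass Sylow theory: since $\varphi$ is cyclic, $N$ is normal in $G$ with $G/N\cong\ZZ_p$ and $|N|=2$, so Schur--Zassenhaus together with the triviality of $\Aut(\ZZ_2)$ gives $G\cong\ZZ_{2p}$, which is not generated by its elements of order $p$; the same contradiction follows.
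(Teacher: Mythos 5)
Your argument is correct and is essentially the paper's own proof: the paper's one-line justification ("groups of order $2p$ cannot be generated by elements of order $p$") is exactly the Sylow/normal-subgroup observation you spell out, combined with Lemma \ref{lem1.1}'s statement that $G$ is generated by the local monodromies, each of order $p$. You have simply filled in the routine details that the paper leaves implicit.
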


\begin{proof}
Since  groups of order $2p$ cannot be generated by elements of order $p$,
the covering $\varphi \circ \psi: Y \ra \PP^1$ cannot be Galois.
\end{proof}

For the rest of this section we assume that $p$ is an odd prime; hence the covering $Y \ra \PP^1$ is not Galois, so $Z \neq Y$ and $H$ and $N$ are the proper subgroups of
$G$ corresponding to $Y$ and $X$ respectively, as in Diagram \eqref{dia:dia1}.

Let $\{1 = g_1, g_2, \dots , g_p\}$ denote a complete set of representatives of right cosets of $N$ in $G$ and  $\{1=n_1,n_2\}$ denote a
 complete set of representatives of right cosets of $H$ in $N$. Then the set $\{ n_ig_j: i = 1, \dots,p, j=1,2 \}$ is a  complete set of representatives of right cosets
of $H$ in $G$.

For $i = 1, \dots , p$ consider
$$
\Delta_i := \{H n_1g_i, H n_2g_i \}
$$
as a set of two elements. Then the right action of $G$ on the right cosets of $H$ in $G$ induces a
transitive action of $G$ on the set
$$
\Omega := \Delta_1 \cup \Delta_2 \cup \cdots \cup \Delta_p.
$$
This is just the right action of $G$ on the right cosets of $N$ in $G$. Now denote for $i = 1, \dots,p$,
\begin{equation} \label{eq1.1}
H_i:=g_i^{-1}Hg_i.
\end{equation}
Clearly each $H_i$ is a normal subgroup of index 2 in $N$.

\begin{lem} \label{l1.1}
\begin{enumerate}
\item[(i)] Any element of $H_i$ stabilizes each of the two points of $\Delta_i$;
\item[(ii)] $ N$ is the stabilizer of each set $\Delta_i$.
\end{enumerate}
\end{lem}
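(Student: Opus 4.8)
The plan is to argue directly with the right-multiplication action of $G$ on the cosets of $H$, using only two structural facts. First, $H$ is normal of index $2$ in $N$, since every index-$2$ subgroup is normal. Second, $N$ is normal in $G$, because $\varphi\colon X \to \PP^1$ is cyclic, hence Galois, so $g_i^{-1}Ng_i = N$ for all $i$; this is in fact already used above to see that $H_i \subset N$.

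For (i), I would fix $h \in H_i = g_i^{-1}Hg_i$ and $j \in \{1,2\}$. The element $h$ sends the point $Hn_jg_i$ of $\Delta_i$ to $Hn_jg_ih$, so stabilizing this point is equivalent to the assertion $n_jg_ihg_i^{-1}n_j^{-1} \in H$. Now set $a := g_ihg_i^{-1}$; by the definition of $H_i$ we have $a \in H$, and since $H$ is normal in $N$ and $n_j \in N$, it follows that $n_jan_j^{-1} \in H$. As $j$ was arbitrary, $h$ fixes both points of $\Delta_i$, which is (i).

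For (ii), I would first record that the partition $\Omega = \Delta_1 \cup \cdots \cup \Delta_p$ is exactly the partition of $\Omega$ into right cosets of $N$ in $G$: indeed $Hn_1g_i$ and $Hn_2g_i$ are precisely the two $H$-cosets contained in $Ng_i$, because $N = Hn_1 \cup Hn_2$. Consequently $g \in G$ preserves the set $\Delta_i$ if and only if $Ng_ig = Ng_i$, i.e. $g_igg_i^{-1} \in N$, i.e. $g \in g_i^{-1}Ng_i$; and this stabilizer equals $N$ since $N$ is normal in $G$. (One could also avoid using normality of $N$ here: once $Ng_ig = Ng_i$, the two sets $\{Hn_1g_ig, Hn_2g_ig\}$ and $\{Hn_1g_i, Hn_2g_i\}$ are both the decomposition of $Ng_i$ into its two $H$-cosets, hence coincide. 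But invoking the normality of $N$ in $G$ is cleaner.)

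I do not expect any serious obstacle here: the statement is essentially bookkeeping. The only points requiring care are keeping the left/right conventions of the coset action straight, and remembering that the conjugates appearing above land back inside $H$, respectively $N$, precisely because $H$ is normal in $N$ and $N$ is normal in $G$.
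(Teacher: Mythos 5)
Your proof is correct and follows essentially the same route as the paper: part (i) is the normality of $H$ in $N$ applied to the conjugate $g_ihg_i^{-1}\in H$, and part (ii) is exactly the paper's observation that the $\Delta_i$ are the right cosets of $N$ in $G$ (you merely spell out the resulting identification of the stabilizer with $g_i^{-1}Ng_i=N$, where the paper instead also offers a direct computation combined with the maximality of $N$ as a subgroup of prime index). No gaps.
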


\begin{proof}
For (i) use that that $H$ is normal in $N$.
By definition, $N$ is the normal subgroup of $G$ corresponding to the covering $X \ra \PP^1$. Since
the $\Delta_i$ represent a right cosets of $N$ in $G$, this implies (ii). One can also see this  directly:
suppose $n \in N$. For any $i, 1 \le i \le p$ there
is an $n'_i \in N$ such that $n = g_i^{-1}n'_i g_i$. Then we have
$$
\Delta_i n = \{Hn_1g_i,Hn_2g_i\}g_i^{-1}n'_ig_i = \{Hn_1n'_ig_i, Hn_2n'_ig_i \} = \Delta_i.
$$
Since $ \; G \; $ does not stabilize $ \; \Delta_i \; $ and $ \; N \; $ has prime index in $ \; G, \; $ we conclude that $ \; N \; $ is the stabilizer of $ \; \Delta_i.$
\end{proof}

Recall the representation $\rho: G \ra A_{2p}$. Since $N$ is a normal subgroup of index $p$ in $G$,
we may enumerate the right cosets $\Delta_i$ of $N$ in $G$ in such a way that we can identify the
set $\Delta_i$ with the set $\{i,p+i\}$ and the action of $G$ on the $\Delta_i$ corresponds to
the permutation (right-)action of the group $A_{2p}$ on the set $\{1, \dots, 2p\}$.
Moreover, fixing a branch point, we may enumerate its branches  in such a way that the
local monodromy around this point
is given by the cycle
$$
\sigma := (1,2,\dots,p)(p+1,p+2,\dots,2p).
$$

\begin{lem} \label{l1.2}
$$
N \cong ({\mathbb Z}_ 2)^{p-1}.
$$
\end{lem}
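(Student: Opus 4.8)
The plan is to realize $N$ as an explicit $\mathbb F_2[\mathbb Z_p]$-module and to trap it between two bounds. By Lemma~\ref{l1.1}(ii), $N$ stabilizes every $\Delta_i$, so the injective representation $\rho$ of Lemma~\ref{lem1.1} carries $N$ into the subgroup $B=\langle\tau_1,\dots,\tau_p\rangle\subseteq A_{2p}$, where $\tau_i=(i,\,p+i)$ is the transposition of the two points of $\Delta_i$. The $\tau_i$ are commuting independent involutions, so $B\cong(\mathbb Z_2)^p$ and $N$ is elementary abelian of rank $\le p$. Since $\sigma\tau_i\sigma^{-1}=\tau_{i+1}$, with indices read mod $p$, conjugation by $P=\langle\sigma\rangle$ turns $B$ into the regular module $\mathbb F_2[\mathbb Z_p]$, and because $N\trianglelefteq G$ with $\sigma\in G$ the image $\rho(N)$ is an $\mathbb F_2[\mathbb Z_p]$-submodule of $B$. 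Everything reduces to identifying this submodule.

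The upper bound $|N|\le 2^{p-1}$ is the easy half: a product $\prod_{i\in S}\tau_i$ is a product of $|S|$ disjoint transpositions, hence an even permutation only if $|S|$ is even, so $\rho(N)\subseteq A_{2p}$ forces $\rho(N)$ into the even-weight submodule $E=\{v\in\mathbb F_2^p:\sum_i v_i=0\}$, which is the augmentation submodule $(t-1)\,\mathbb F_2[\mathbb Z_p]$ and has rank $p-1$.

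For the reverse inequality one must show $\rho(N)=E$. It is nonzero because $\psi$ has degree $2$, so $H\subsetneq N$ (equivalently, $Y$ connected forces $G$, hence $N$, to move the two points inside some block). To obtain all of $E$ I would use that $G$ is generated by the $\beta$ local monodromies $\mu_1,\dots,\mu_\beta$ of $\varphi\circ\psi$, each of order $p$ by Lemma~\ref{lem1.1}; in particular each $\mu_j$ lies outside $N$ (whose exponent is $2$) and induces a $p$-cycle on $\{\Delta_1,\dots,\Delta_p\}$, and one of them is the distinguished cycle $\sigma$, say $\mu_1=\sigma$. Writing $\mu_j=\nu_j\sigma^{a_j}$ with $\nu_j\in N$ and $a_j\not\equiv 0\pmod p$, one has $G=\langle\sigma,\nu_2,\dots,\nu_\beta\rangle$, and since $\sigma$ normalizes $N$ this forces $N$ to be exactly the $\mathbb F_2[\mathbb Z_p]$-submodule of $E$ generated by $\nu_2,\dots,\nu_\beta$, the only constraint being the relation obtained from $\mu_1\cdots\mu_\beta=1$. (Geometrically: $Z\to X$ is the compositum over $X$ of the $p$ conjugate \'etale double covers, so $N\cong\langle\eta,g^{*}\eta,\dots,g^{(p-1)*}\eta\rangle\subseteq JX[2]$, where $\eta$ is the two-torsion class of $\psi$ and the relation is $\sum_j g^{j*}\eta=\varphi^{*}\Nm(\eta)=0$.)

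The step I expect to be the main obstacle is this last one: showing the submodule fills up all of $E$, i.e.\ that modulo the single relation the classes $\nu_2,\dots,\nu_\beta$ (equivalently the conjugates of $\eta$) are as independent as possible. When $2$ is a primitive root modulo $p$ this is automatic, for then $1+t+\dots+t^{p-1}$ is irreducible over $\mathbb F_2$, so $E$ is a simple $\mathbb F_2[\mathbb Z_p]$-module and the nonzero submodule $\rho(N)$ equals it. In general $E$ splits into the $(p-1)/\mathrm{ord}_p(2)$ pairwise non-isomorphic simple $\mathbb F_2[\mathbb Z_p]$-modules of dimension $\mathrm{ord}_p(2)$, and one must check that $\rho(N)$ meets every one of them; carrying this out — presumably by exploiting that $\psi$ is a genuine connected \'etale cover of the whole curve $X$ — is where the real work lies.
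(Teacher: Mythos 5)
Your reduction and your upper bound are sound and essentially coincide with the paper's argument: the paper proves $H_1\cap\cdots\cap H_{p-1}=\{1\}$ (a nontrivial element of that intersection would be the odd permutation $\tau_p$) and embeds $N$ into $N/H_1\times\cdots\times N/H_{p-1}$, which is the same parity count as your observation that $\rho(N)$ lies in the even-weight submodule $E\cong(\mathbb Z_2)^{p-1}$ of $\langle\tau_1,\dots,\tau_p\rangle$. The genuine gap is the one you flag yourself: you only establish $0\neq\rho(N)\subseteq E$, and you complete the identification $\rho(N)=E$ only when $2$ is a primitive root modulo $p$, where $E$ is a simple $\mathbb F_2[\mathbb Z_p]$-module. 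For a general odd prime the proposal does not prove the lemma.

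You should know, however, that your suspicion about where the difficulty sits is well placed, and that the difficulty is not merely technical. The paper's lower bound rests on the assertion $s_1=\tau_1\tau_2\in N$, justified only by the remark that $\tau_1\tau_2$ stabilizes every $\Delta_i$; that shows $\tau_1\tau_2\in N$ \emph{provided} $\tau_1\tau_2\in G$, and membership in $G$ is exactly the point at issue. In your dual picture $\widehat N$ is the cyclic $\mathbb F_2[\mathbb Z_p]$-module generated by the $2$-torsion class $\eta\in JX[2]$ defining $\psi$, and nothing in the hypotheses forces $\eta$ to have nonzero component in every isotypic block of $JX[2]\cong\bigoplus_j S_j^{\beta-2}$. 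Concretely, for $p=7$ the Frobenius group $\mathbb F_8\rtimes\mathbb Z_7$ of order $56$ is generated by three elements of order $7$ with product $1$; Riemann existence then yields a tower $Y\to X\to\PP^1$ satisfying all hypotheses of the lemma in which $N\cong(\mathbb Z_2)^3$, with $\rho(N)\subset E$ the $[7,3]$ simplex code whose nonzero words all have weight $4$ — in particular no $\tau_i\tau_j$ lies in $\rho(N)$. So the missing step cannot be filled in general: one needs either the hypothesis that $2$ is a primitive root mod $p$ (in which case your argument is complete), or the hypothesis that $\eta$ has nonzero component in each isotypic piece of $JX[2]$, under which your last paragraph goes through.
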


\begin{proof}
Consider, for $i = 1, \dots, p, \; $ the transposition
$t_i:= (i, p+i)$. Certainly $t_i$ is not contained in $G$, since $G \subset A_{2p}$. However, we have
$$
s_1:= t_1t_2 \in N,
$$
since it stabilizes each set $\{(i, p+i)\}$ and so is in $N$ by Lemma \ref{l1.1} and the identifications.
Moreover,
\begin{eqnarray}
\nonumber \sigma^{-1} t_1t_2 \sigma &=& t_2t_3 =: s_2 \in N\\
\nonumber \sigma^{-2} t_1t_2 \sigma^2 &=& t_3t_4 =: s_3 \in N\\
\cdots &&  \label{e1.1}\\
\nonumber \cdots &&\\
\nonumber \sigma^{-(p-1)} t_1t_2 \sigma^{p-1} &=& t_pt_1 =:s_p \in N
\end{eqnarray}
which gives
$$
\prod_{i=1}^p s_i = (t_1t_2)(\sigma^{-1} t_1t_2 \sigma)(\sigma^{-2} t_1t_2 \sigma^2) \cdots
(\sigma^{-(p-1)} t_1t_2 \sigma^{p-1}) = 1.
$$
Since the cycles $s_i$ pairwise commute, and clearly there is no non-trivial relation between the cycles
$s_1, s_2, \dots s_{p-1}$, this implies
$$
2^{p-1} = |\langle s_1,s_2, \dots, s_p \rangle | \leq  |N|.
$$
Since a non-trivial  element of $ \; H_1 \cap H_2 \cap ...\cap H_{p-1} \; $ would be the transposition exchanging the two points of $ \; \Delta_p \; $, which is not in $ \; A_{2p} \; $, we have
  $ \; H_1 \cap H_2 \cap ...\cap H_{p-1}  = \{ 1 \}.  \; $
 Consider the group homomorphism $$ \; \Phi' : N \rightarrow N/H_1 \times N/H_2 \times \ldots \times N/H_{p-1} \; $$ defined by
$ \; \Phi'(n) = (H_1n, H_2n, \ldots , H_{p-1}n). \; $
Since $ \; \ker(\Phi') = \displaystyle\bigcap_{i=1}^{p-1}H_i = \{ 1 \}, \; $ we have  $ \; N \cong N/ \ker{\Phi'} \lesssim ({\mathbb Z}_2)^{p-1}.\; $ Hence
$ \; N \cong  ({\mathbb Z}_2)^{p-1}.\; $
\end{proof}

\begin{thm} \label{thm1.6}
Let $X \ra \PP^1$ be a cyclic covering of degree an odd prime $p$, and $Y \ra X$ be an \'etale double covering.
Let $Z \ra \PP^1$ be the  Galois closure of the composition $Y \ra \PP^1$ with Galois group $G$.
With the notation of above, if $P$ denotes the subgroup of $G$ generated by the cycle $\sigma$, then $G$ is the semi-direct product
$$
G = N \rtimes P \simeq \ZZ_2^{p-1}  \rtimes \ZZ_p.
$$
\end{thm}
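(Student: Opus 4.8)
The plan is to assemble the semidirect product from the pieces already in place. By Lemma \ref{l1.2} we know $N \cong \ZZ_2^{p-1}$, and by construction $P = \langle \sigma \rangle \cong \ZZ_p$ since $\sigma$ is a product of two disjoint $p$-cycles. It remains to check three things: that $N$ is normal in $G$, that $N \cap P = \{1\}$, and that $G = NP$ (equivalently $|G| = |N|\cdot|P| = 2^{p-1}p$). Normality of $N$ is immediate: $N$ is the subgroup of $G$ corresponding to the subcover $X \to \PP^1$, and since $X \to \PP^1$ is itself Galois (it is cyclic of degree $p$), $N$ is normal in $G$ with $G/N \cong \ZZ_p$; alternatively, $N$ has prime index $p$ in $G$ and is the stabilizer of $\Delta_1$ (Lemma \ref{l1.1}(ii)), and any index-$p$ subgroup... but normality is cleanest from the Galois-closure description of the diagram \eqref{dia:dia1}.

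Next, $N \cap P = \{1\}$: a nontrivial element of $P$ is a power $\sigma^k$ with $1 \le k \le p-1$, which has order $p$, while every element of $N \cong \ZZ_2^{p-1}$ has order dividing $2$; since $p$ is odd, the only common element is the identity. Finally, for $G = NP$ it suffices to show $|G| \le 2^{p-1}p$, because the reverse inequality $|G| \ge |NP| = |N|\,|P|/|N\cap P| = 2^{p-1}p$ follows from the previous two points. For the upper bound, note that $G/N$ is cyclic of order $p$ generated by the image of $\sigma$, so $|G| = p\,|N| = p\,2^{p-1}$ exactly. Thus $|NP| = |G|$, forcing $G = NP$, and together with $N \trianglelefteq G$ and $N \cap P = \{1\}$ this gives the internal semidirect product $G = N \rtimes P$.

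I expect the only point requiring a word of care is the identification $G/N \cong \ZZ_p$ with generator the class of $\sigma$, which is exactly the statement that $N$ is normal of index $p$ and that $\sigma$ projects to a generator of the quotient; both are built into the setup (the cover $X \to \PP^1$ is cyclic of degree $p$, and $\sigma$ is the chosen local monodromy, which of course surjects onto the monodromy of $\varphi$ upon passing to $G/N$). With $|G| = 2^{p-1}p$ pinned down and $P$ a complement to $N$ by the order and intersection computations above, the semidirect-product decomposition is forced, and the isomorphism type $G \simeq \ZZ_2^{p-1} \rtimes \ZZ_p$ follows from Lemma \ref{l1.2}. The action of $P$ on $N$ in this semidirect product is the one already exhibited in \eqref{e1.1}, namely conjugation by $\sigma$ cyclically permuting the generators $s_1, \dots, s_p$ of $N$ subject to the single relation $\prod s_i = 1$.
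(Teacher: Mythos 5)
Your proposal is correct and follows essentially the same route as the paper: the paper's one-line proof also rests on $N$ being normal of index $p$ with $|N|=2^{p-1}$ (Lemma \ref{l1.2}), from which $P=\langle\sigma\rangle$ of coprime order $p$ is a complement; you have merely written out the order count, the trivial intersection $N\cap P=\{1\}$, and the identification of $G/N$ explicitly.
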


\begin{proof}
Since $ \; N \; $ is a normal subgroup of index $ \; p \; $ in $ \; G \; $ and $ \; \vert N \vert = 2^{p-1} \; $ we have $ \; G = N \rtimes P.$
\end{proof}

A presentation of $G$ is given as
\begin{equation} \label{eq:Gpres}
  G = \langle s_1, \dots, s_{p}, \sigma \;|\; \prod_{i=1}^p s_i =1,  \sigma^p = 1, \; s_1^2 =1, \; \sigma^{-1}s_j \sigma = s_{j+1} \; \mbox{for} \;  j = 1, \dots,p -1\rangle.
\end{equation}

\subsection{The subcovers of $Z$}
Let $p$ denote an odd prime. According to a well-known result of elementary number theory, the number
\begin{equation}  \label{e1.3}
m:= \frac{1}{p}(2^{p-1} -1)
\end{equation}
is an integer. The abelian group $ \; N \; $ has  exactly  $ \; m \; $ conjugacy classes of maximal subgroups with respect to the action of $ \; P. \; $  For each $ \; 1 \leq j \leq m \; $ consider  $\; R_j \; $  a representative of the corresponding conjugacy class of maximal subgroups of  $ \; N.\; $ Here $ \; R_1 = H. \; $
\vspace{1mm}\\
To each  subgroup $ \; R_j \; $ corresponds a double covering of $X$.
Let
$$
Y_j := Z/R_j\quad  \mbox{for} \quad j=1,\dots,m
$$
denote the corresponding curves. In particular, $Y_1 = Y$.

Denoting moreover $T := Z/P$, we have the following diagram
\begin{equation} \label{d1.4}
\xymatrix{
&&& Z \ar[ddl] \ar[ddlll]_{2^{p-2}:1} \ar[ddll] \ar[dr]^{p:1}\\
&&&& T \ar[dddll]^{2^{p-1}:1}\\
Y_1  \ar[dr]_{2:1} & \cdots  \ar[d] & Y_m \ar[dl]\\
&X  \ar[dr]_{p:1} &\\
&& \PP^1
}
\end{equation}

\begin{lem} \label{l1.4}
The map $Z \ra X$ of degree $2^{p-1}$ is \'etale. In particular, all covers $Y_i \ra X$ of the above diagram are \'etale.
\end{lem}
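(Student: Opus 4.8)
The plan is to read off the inertia (point–stabilizer) groups of the Galois cover $\widetilde\varphi\colon Z\to\PP^1$ from Lemma~\ref{lem1.1}, and then use the coprimality of $|N|=2^{p-1}$ with $p$ to see that no such group meets $N$ nontrivially; this is precisely the assertion that $Z\to X=Z/N$ is unramified.

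First I would fix a point $z\in Z$ and consider its stabilizer $G_z\le G$ for the $G$-action on $Z$. By Lemma~\ref{lem1.1} every nontrivial element of $G$ fixing a point of $Z$ has order $p$; applying this to the elements of $G_z$, we see that $G_z$ has exponent dividing $p$, hence by Cauchy's theorem $G_z$ is a $p$-group. Since $|G|=2^{p-1}p$ by Theorem~\ref{thm1.6}, the $p$-part of $|G|$ is just $p$, so $|G_z|\in\{1,p\}$. In either case $G_z\cap N$ is a $p$-subgroup of $N\cong\ZZ_2^{p-1}$, hence $G_z\cap N=\{1\}$. Because $Z\to X$ is the quotient map for the action of the (normal) subgroup $N$, its ramification index at $z$ equals $|G_z\cap N|=1$; as $z$ was arbitrary, $Z\to X$ is étale of degree $2^{p-1}$.

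Finally, for each $i$ the curve $Y_i=Z/R_i$ with $R_i\subset N$ sits between $Z$ and $X$, so the étale map $Z\to X$ factors as $Z\to Y_i\to X$ with $Z\to Y_i$ surjective; since a composition $h\circ g$ with $g$ surjective can be unramified only if $h$ is unramified, each $Y_i\to X$ is étale. (Equivalently, $N$ being abelian, $R_i$ is normal in $N$, so $Y_i\to X$ is Galois with group $N/R_i\cong\ZZ_2$, and its inertia at a point is the image of $G_z\cap N=\{1\}$.) There is essentially no hard step here: the only thing that requires care is the passage from Lemma~\ref{lem1.1} to the bound $|G_z|\mid p$, after which the coprimality of $2$ and $p$ and the elementary fact about compositions of covers finish the argument.
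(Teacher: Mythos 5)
Your argument is correct and is essentially the paper's own proof, which simply cites Lemma~\ref{lem1.1}: since every nontrivial element of $G$ fixing a point of $Z$ has order $p$, and $N\cong\ZZ_2^{p-1}$ has no elements of order $p$, no nontrivial element of $N$ (or of any $R_i$) has a fixed point, so $Z\to X$ and all intermediate covers $Y_i\to X$ are \'etale. Your expansion via the stabilizers $G_z$ and the coprimality of $2$ and $p$ is just a more detailed writing-out of the same idea.
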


\begin{proof}
This follows immediately from Lemma \ref{lem1.1}
\end{proof}

\begin{prop}
Let $\beta$ denote the number of branch values of $X \ra \PP^1$, with $\beta \geq 3$. Then the
genera of the curves in the above diagram are:
\begin{itemize}
\item $g(X) = \frac{p-1}{2}(\beta - 2)$;
\item $g(Y_i) = (p-1)(\beta - 2) -1$;
\item $g(Z) = 2^{p-2}(p-1)\beta - (p2^{p-1} -1)$;
\item $g(T) = \frac{2^{p-1} -1}{p}(\frac{p-1}{2}\beta -p)$.
\end{itemize}
\end{prop}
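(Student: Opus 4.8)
The plan is to derive all four genera from the Riemann--Hurwitz formula, working downward through Diagram~\eqref{d1.4}. For $\varphi\colon X\to\PP^1$, which is cyclic of prime degree $p$, each of the $\beta$ branch values is the image of a single point with ramification index $p$; thus $2g(X)-2=-2p+\beta(p-1)$, which gives $g(X)=\tfrac{p-1}{2}(\beta-2)$.

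By Lemma~\ref{l1.4} the coverings $Y_i\to X$ and $Z\to X$ are \'etale, of degrees $2$ and $2^{p-1}$, so $2g(Y_i)-2=2\bigl(2g(X)-2\bigr)$ and $2g(Z)-2=2^{p-1}\bigl(2g(X)-2\bigr)$. Inserting the value of $g(X)$ and simplifying gives $g(Y_i)=(p-1)(\beta-2)-1$ and $g(Z)=2^{p-2}(p-1)\beta-(p\,2^{p-1}-1)$.

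For $g(T)$ I would analyse the ramification of the degree-$p$ Galois covering $Z\to T$, whose group is $P=\langle\sigma\rangle$. Because $Z\to X$ is \'etale and $X\to\PP^1$ ramifies only over the $\beta$ branch values, $Z\to\PP^1$, and a fortiori $Z\to T$, is unramified away from the fibres over these $\beta$ points; over such a point $b$ the covering $Z\to\PP^1$ has $|G|/p=2^{p-1}$ points, each of ramification index $p$, and by Lemma~\ref{lem1.1} each has (cyclic) inertia group a subgroup of $G$ of order $p$. A point $z$ over $b$ is ramified in $Z\to T$ iff $I_z\cap P\neq\{1\}$, which, as $|I_z|=|P|=p$ is prime, forces $I_z=P$. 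The key point is that $N_G(P)=P$, equivalently $C_N(\sigma)=\{1\}$: by~\eqref{eq:Gpres} the element $\sigma$ permutes $s_1,\dots,s_p$ as a single $p$-cycle and the only relation among the $s_i$ is $\prod_i s_i=1$, so a $\sigma$-fixed element of $N$ corresponds to a $\sigma$-invariant subset of $\{s_1,\dots,s_p\}$, necessarily $\emptyset$ or the whole set, and in both cases equals $1$. Since all order-$p$ subgroups of $G$ are $G$-conjugate (being Sylow), the $2^{p-1}$ inertia groups over $b$ run exactly once through the $[G:N_G(P)]=2^{p-1}$ conjugates of $P$; hence exactly one point $z_0$ over $b$ has $I_{z_0}=P$ and is totally ramified in $Z\to T$, while $P$ acts freely on the other $2^{p-1}-1$ points over $b$, which therefore form $m=\tfrac{1}{p}(2^{p-1}-1)$ free $P$-orbits. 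Thus $Z\to T$ is totally ramified over exactly $\beta$ points of $T$ and unramified elsewhere, so $2g(Z)-2=p\bigl(2g(T)-2\bigr)+\beta(p-1)$; solving for $g(T)$, substituting the value of $g(Z)$ and using~\eqref{e1.3} yields $g(T)=\tfrac{2^{p-1}-1}{p}\bigl(\tfrac{p-1}{2}\beta-p\bigr)$.

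The main obstacle is exactly this last computation, and inside it the identification $N_G(P)=P$ together with the consequent fact that precisely one point of $Z$ over each branch value of $\varphi$ is fixed by $P$ (hence totally ramified in $Z\to T$). Once this ramification data is in place the three remaining genus formulas are immediate Riemann--Hurwitz computations, using only the value of $g(X)$ and the \'etaleness statement of Lemma~\ref{l1.4}.
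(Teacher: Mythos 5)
Your proof is correct and follows essentially the same route as the paper: the first three genera by Riemann--Hurwitz using total ramification of $X\to\PP^1$ and \'etaleness of $Z\to X$, and the last by counting $P$-orbits in the fibres of $Z\to\PP^1$ over the $\beta$ branch values (you apply Hurwitz to $Z\to T$ where the paper applies it to $T\to\PP^1$, but both rest on the same fact that each such fibre consists of one $P$-fixed point and $m$ free $P$-orbits). Your justification of that orbit count via $C_N(\sigma)=\{1\}$, hence $N_G(P)=P$, is a welcome addition, since the paper asserts the count without proof.
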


\begin{proof}
The first 3 assertions are obvious, since $X \ra \PP^1$ is totally ramified and $Z\ra X$ is \'etale.
For the last assertion note that over each branch value of $T \ra \PP^1$ there are
$m = \frac{2^{p-1} -1}{p}$ branch points of index $p-1$ and one point \'etale over $\PP^1$.
So the Hurwitz formula gives the assertion.
\end{proof}

As an immediate consequence we obtain the following result.

\begin{cor} \label{cor1.6}
If $P(Y_i/X)$ denotes the Prym variety of $Y_i/X$, we have
$$
\sum_{i=1}^m \dim P(Y_i/X) = \dim JT.
$$
\end{cor}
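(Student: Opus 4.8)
The plan is to deduce the identity directly from the genus computations of the preceding Proposition, together with the standard dimension formula for the Prym variety of an \'etale double cover. First I would recall that for an unramified double cover $Y_i \ra X$ one has $\dim P(Y_i/X) = g(Y_i) - g(X)$: this is immediate from the isogeny $JY_i \sim JX \times P(Y_i/X)$, or directly from additivity of Euler characteristics, $2 - 2g(Y_i) = 2(2 - 2g(X))$, which forces $g(Y_i) = 2g(X) - 1$ and hence $\dim P(Y_i/X) = g(X) - 1$. By Lemma \ref{l1.4} every cover $Y_i \ra X$ in Diagram \eqref{d1.4} is \'etale, so this applies to each term of the sum.

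Next I would note that $g(X) - 1$ is independent of $i$, so that, using the value of $g(X)$ from the Proposition and the definition \eqref{e1.3} of $m$,
\[
\sum_{i=1}^m \dim P(Y_i/X) = m\,(g(X) - 1) = \frac{2^{p-1}-1}{p}\left(\frac{p-1}{2}(\beta - 2) - 1\right).
\]
Finally I would simplify the inner factor, $\frac{p-1}{2}(\beta - 2) - 1 = \frac{p-1}{2}\beta - p$, so that the right-hand side becomes $\frac{2^{p-1}-1}{p}\left(\frac{p-1}{2}\beta - p\right)$, which is exactly the value of $g(T)$ recorded in the Proposition. Since $\dim JT = g(T)$, this gives the assertion.

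I do not expect any genuine obstacle here: the argument is pure bookkeeping. The only two points worth a moment's attention are that the Prym dimension of an \emph{\'etale} double cover equals $g(Y_i) - g(X) = g(X) - 1$ (rather than any parity-shifted variant), and that all $m$ curves $Y_i$ really do share the same genus --- which is automatic, since $g(Y_i)$ depends only on the ramification of $Y_i \ra \PP^1$, and by \'etaleness of $Y_i \ra X$ together with total ramification of $X \ra \PP^1$ this ramification data is the same for every $i$.
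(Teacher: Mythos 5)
Your proof is correct and follows the same route the paper intends: the corollary is stated as an immediate consequence of the genus formulas in the preceding Proposition, and your computation $\dim P(Y_i/X)=g(Y_i)-g(X)=\frac{p-1}{2}\beta-p$ summed $m$ times against $g(T)=\frac{2^{p-1}-1}{p}\bigl(\frac{p-1}{2}\beta-p\bigr)$ is exactly that bookkeeping, carried out explicitly.
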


This suggests that there is a relation between the Prym varieties $P(Y_i/X)$ and the Jacobian $JT$.
The aim of this paper is to study the relation.

\subsection{The rational representations of $G$}
We follow \cite[Section 8.2]{s} to determine the irreducible representations of a
semidirect product $G = N \rtimes P. \; $ Let $\widehat N$ be the character group of $N$. The group $P$ acts on
$\widehat N$ in the usual
way. The stabilizer in  $ \; P \; $ of the trivial character $\chi_0$  of $ \; N \; $ is $P$ itself, whereas the stabilizer of any non-trivial complex irreducible 
character of $ \; N \; $ consists of  $ \; \{ 1\} $ only. Hence there are exactly $1+m$ orbits of the action of $P$ on $\widehat N$, with $m$ as in \eqref{e1.3}. Let $\chi_0, \chi_1, \dots, \chi_m$ be a system of representatives of these orbits,  $ \; \rho_0, \dots, \rho_{p-1}$ ($\rho_0$ the trivial representation)  the irreducible representations of the cyclic group $P \; $ and $ \; \eta \; $ the trivial character of $ \; \{ 1\} .$ 
\\
According to \cite[Proposition 25]{s}
$$ \{ \chi_0 \otimes \rho_j , \; \; \Ind_N^G(\chi_i \otimes \eta) \; \; / \: \; 0 \leq j \leq p-1, \; \; 1 \leq i \leq m \}$$
is the set of all complex irreducible representations of $G.$ The next result follows immediately 
\begin{cor}
The rational irreducible representations of $G$ are exactly the trivial representation $\rho_0 =  \chi_0 \otimes \rho_0$,
the representations $\theta_i =  \Ind_N^G(\chi_i \otimes \eta) \; $ of degree
$\; p$ for $i = 1, \dots, m$, and the representation $\psi := (\chi_0 \otimes \rho_1)  \oplus \dots  \oplus (\chi_0 \otimes \rho_{p-1}) $ of degree $p-1$.
\end{cor}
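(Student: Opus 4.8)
The plan is to deduce the rational irreducible representations from the list of complex irreducible ones recorded just above, using the general principle that the $\QQ$-irreducible representations of a finite group correspond bijectively to the $\Gal(\overline{\QQ}/\QQ)$-orbits of complex irreducible representations, a $\QQ$-irreducible $W$ being related to its orbit by $W\otimes_\QQ\CC\cong m_W\bigoplus_V V$ with $m_W$ the Schur index. Thus I must (a) determine the Galois orbits among $\chi_0\otimes\rho_0,\dots,\chi_0\otimes\rho_{p-1}$ and $\theta_1,\dots,\theta_m$, and (b) check that every Schur index equals $1$.

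For the abelian part: the trivial character $\chi_0\otimes\rho_0$ is rational and forms its own orbit, yielding the trivial representation. Each $\chi_0\otimes\rho_j$ with $1\le j\le p-1$ is the inflation to $G$ of the character $\rho_j$ of $P\cong G/N\cong\ZZ_p$, hence takes values in the $p$-th roots of unity; since $\Gal(\QQ(\zeta_p)/\QQ)$ permutes the primitive $p$-th roots of unity transitively, the $p-1$ characters $\chi_0\otimes\rho_j$ form a single Galois orbit. They are one-dimensional, so their Schur index is $1$, and their sum $\psi$ is therefore a $\QQ$-irreducible representation of degree $p-1$. (This is the familiar statement $\QQ[\ZZ_p]\cong\QQ\times\QQ(\zeta_p)$.)

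For the representations $\theta_i=\Ind_N^G(\chi_i\otimes\eta)$ I would argue rationality by hand rather than via a Schur-index computation. Since $N\cong\ZZ_2^{p-1}$, each character $\chi_i$ takes values in $\{\pm1\}$, so it is already a one-dimensional $\QQ[N]$-module, say $\QQ_{\chi_i}$. Then $M_i:=\QQ[G]\otimes_{\QQ[N]}\QQ_{\chi_i}$ is a $\QQ[G]$-module with $M_i\otimes_\QQ\CC\cong\CC[G]\otimes_{\CC[N]}\CC_{\chi_i}\cong\theta_i$; hence $\theta_i$ is realizable over $\QQ$. Being complex irreducible, $\theta_i$ is then $\QQ$-irreducible, of degree $[G:N]=p$. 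The $\theta_i$ are pairwise inequivalent over $\CC$, hence over $\QQ$, and none of them occurs in the trivial representation or in $\psi$. Finally the representations $\chi_0\otimes\rho_0$, $\psi$, $\theta_1,\dots,\theta_m$ exhaust all $p+m$ complex irreducibles of $G$ from the preceding display, so the list is complete.

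The step I expect to require the most care is the rationality of each $\theta_i$: a complex irreducible with rational-valued character need not be realizable over $\QQ$ in general (the quaternion group is the standard counterexample), so one really does need the explicit construction of the $\QQ[G]$-module $M_i$ above, using that $\chi_i$ is valued in $\{\pm1\}$ and that induction preserves the field of definition. Everything else is bookkeeping with the orbit–Schur-index dictionary.
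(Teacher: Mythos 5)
Your proof is correct and follows the route the paper intends: the paper simply asserts the corollary "follows immediately" from Serre's list of complex irreducibles, and your argument supplies exactly the expected details (Galois orbits, the explicit rational form of $\Ind_N^G(\chi_i\otimes\eta)$ coming from the $\{\pm1\}$-valued $\chi_i$, and the Schur indices all equal to $1$, which the paper also records later when computing the group algebra decomposition).
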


According to \cite[Proposition 13.6.1]{bl} the rational irreducible representations of $G$ correspond
canonically and bijectively to a set of $G$-stable abelian subvarieties of the Jacobian $JZ$ of $Z$
such that the
addition map is an isogeny. If the abelian subvariety of $JZ$ corresponding to the rational irreducible
representation $\rho$ of $G$ is denoted by $J_\rho$, the isotypical decomposition of $JZ$ is the
isogeny given by the addition map
$$
J_{\rho_0} \times J_\psi \times J_{\theta_1} \times \cdots \times J_{\theta_m} \ra JZ.
$$

Furthermore, according to \cite[Proposition 13.6.2]{bl} and \cite{cr2}, for each rational irreducible representation $\rho$ of $G$ there exist abelian subvarities $B_{\rho}$ of $J_\rho$ such that $B_{\rho}^{n_{\rho}}$ is isogenous to $J_\rho$, with
$$
n_{\rho} = \frac{\dim V_{\rho}}{m_{\rho}} ,
$$
where $V_{\rho}$ is a complex irreducible representation of $G$ Galois associated to $\rho$ and $m_{\rho}$ is the Schur index of $V_{\rho}$.

The subvarieties  $B_{\rho}$ are, in general, determined only up to isogeny, with  $B_{\rho_0}=J_{\rho_0} = J(Z/G)$. In our case, we have $ \; m_{\rho_0} = m_{\psi} = m_{\theta_j} = 1, \; \; \dim(V_0) = \dim(V_{\psi}) = 1 \; $ and $ \; \dim(V_{\theta_j}) = p, \; $ hence

$$
B_\psi  = J_\psi \ \  ,  \  \   J_{\theta_j}  \sim B_{\theta_j}^p  \ \  , \ \  J_{\rho_0} = J\PP^1 =0
$$
where $\sim$ denotes isogeny.

Furthermore, it follows from \cite[Corollary 5.6]{cr2} that
$$
 B_{\theta_j} \sim P(Y_j/X) \ \ \textup{and } \ \   J_\psi \sim J(X) .
$$

 Therefore the group algebra decomposition of $JZ$ is given by

$$
JX \times \prod_{j=1}^m P(Y_j/X)^p  \ra JZ .
$$

\section{The isogeny $\alpha$}

Let the notation be as in Section 1 and for $i = 1, \dots, m$ denote
$$
\nu_i: Z \ra Y_i \quad \mbox{and} \quad \mu: Z \ra T,
$$
the maps of diagram
\eqref{d1.4}, so that $\nu_i^*:JY_i \ra JZ$ and $\Nm \mu: JZ \ra JT$ are the induced homomorphisms
of the corresponding Jacobians. Then the addition map gives a homomorphism
\begin{equation} \label{e2.1}
\alpha:= \sum_{i=1}^m \Nm \mu \circ \nu_i^* : \; \prod_{i=1}^m  P(Y_i/X) \ra JT.
\end{equation}

According to Corollary \ref{cor1.6}, $\displaystyle\prod_{i=1}^m P(Y_i/X)$ and $JT$ are of the same dimension.
The aim of this section is the proof of the following theorem.

\begin{thm} \label{thm2.1}
$\alpha : \displaystyle\prod_{i=1}^m  P(Y_i/X) \ra JT$ is an isogeny with kernel contained in the $2^{p-2}$-division points.
\end{thm}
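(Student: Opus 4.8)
The plan is to construct an explicit ``quasi-inverse'' $\gamma\colon JT\to\prod_{i=1}^{m}P(Y_i/X)$ and to show $\gamma\circ\alpha=[2^{p-2}]$; together with the equality $\dim\prod_i P(Y_i/X)=\dim JT$ of Corollary~\ref{cor1.6} this yields simultaneously that $\alpha$ is an isogeny (a homomorphism of abelian varieties of equal dimension with finite kernel is surjective, hence an isogeny) and that $\ker\alpha\subseteq\bigl(\prod_i P(Y_i/X)\bigr)[2^{p-2}]$. Write $\psi_i\colon Y_i\to X$, let $\iota_i$ be the involution of $Y_i$ over $X$, and let $\chi_i$ be the order-two character of $N$ with $\ker\chi_i=R_i$; view $G\subseteq\End(JZ)$ and, for a subgroup $S\le G$, put $N_S:=\sum_{g\in S}g\in\ZZ[G]$. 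I will use the standard relations $\mu^*\circ\Nm\mu=N_P$, $\nu_i^*\circ\Nm\nu_i=N_{R_i}$, $\Nm\mu\circ\mu^*=[p]$, $\Nm\nu_i\circ\nu_i^*=[2^{p-2}]$, and $N_NN_P=N_G=0$ (the last since $Z/G=\PP^1$).

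I would set $\gamma_i:=\Nm\nu_i\circ\mu^*\colon JT\to JY_i$. First one has to see $\im\gamma_i\subseteq P(Y_i/X)$: the map $\Nm\psi_i\circ\gamma_i$ is the norm of the \'etale cover $Z\to X$ composed with $\mu^*$, and composing it on the left with the pullback along $Z\to X$ gives $N_N\circ\mu^*$, which vanishes because $N_N\circ\mu^*\circ\Nm\mu=N_NN_P=0$ and $\Nm\mu$ is surjective; hence $\Nm\psi_i\circ\gamma_i$ has finite, connected, therefore trivial image, so $\im\gamma_i$ is contained in the connected group $\ker\Nm\psi_i$, i.e.\ in $P(Y_i/X)$. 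Set $\gamma=(\gamma_1,\dots,\gamma_m)$. The heart of the argument is the computation of the $(i,j)$-block $\gamma_i\circ\alpha_j$ of $\gamma\circ\alpha$, where $\alpha_j=\Nm\mu\circ\nu_j^*|_{P(Y_j/X)}$. Precomposing with $\nu_i^*$ turns it into $\nu_i^*\circ\gamma_i\circ\alpha_j=N_{R_i}\circ N_P\circ\nu_j^*|_{P(Y_j/X)}$. Choosing $n_0\in N\setminus R_j$, one has $\nu_j^*\circ\iota_j^*=n_0\circ\nu_j^*$, so $\nu_j^*(P(Y_j/X))=(1-n_0)\,\nu_j^*(JY_j)$ lies in the $\chi_j$-eigenspace of the $N$-action on $JZ$; there $\sigma^k$ sends a vector into the eigenspace for $n\mapsto\chi_j(\sigma^{-k}n\sigma^{k})$, on which $N_{R_i}$ acts by the scalar $\sum_{r\in R_i}\chi_j(\sigma^{-k}r\sigma^{k})$, which equals $|R_i|=2^{p-2}$ if $\sigma^{-k}R_i\sigma^{k}=R_j$ and is $0$ otherwise. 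Since $R_1,\dots,R_m$ represent distinct $P$-conjugacy classes and every maximal subgroup of $N$ has $P$-orbit of size $p$ (none is $P$-invariant), $\sigma^{-k}R_i\sigma^{k}=R_j$ forces $i=j$ and $k=0$; hence $\nu_i^*\circ\bigl(\gamma_i\alpha_j-\delta_{ij}[2^{p-2}]\bigr)=0$ on $P(Y_j/X)$. As $\ker\nu_i^*$ is finite, the morphism $\gamma_i\alpha_j-\delta_{ij}[2^{p-2}]\colon P(Y_j/X)\to P(Y_i/X)$ has finite image, hence trivial image, so $\gamma\circ\alpha=[2^{p-2}]$ and the theorem follows. (Symmetrically, $\alpha\circ\gamma=\Nm\mu\circ(\sum_i N_{R_i})\circ\mu^*=[2^{p-2}]$ on $JT$, after averaging $\sum_i N_{R_i}$ over $P$ to $\sum_{\text{all }R}N_R=2^{p-2}+(2^{p-2}-1)N_N$ and using $\Nm\mu\circ N_N\circ\mu^*=0$.)

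I expect the eigenspace bookkeeping in the second paragraph to be the main obstacle: one must pinpoint $\nu_j^*(P(Y_j/X))$ inside the $N$-isotypical decomposition of $JZ$ and evaluate $N_{R_i}\circ N_P$ on it, the combinatorial core being the free action of $P$, with orbits of size $p$, on the set of maximal subgroups of $N$. A secondary but genuine subtlety is securing the sharp exponent $2^{p-2}$ rather than $2^{p-1}$: this is precisely the point at which one invokes that the a~priori error term, although only known to take values in the finite $2$-group $\ker\nu_i^*$, must in fact vanish since a homomorphism of abelian varieties has connected image.
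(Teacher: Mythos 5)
Your proposal is correct and follows essentially the same route as the paper: your quasi-inverse $\gamma=(\Nm\nu_1\circ\mu^*,\dots,\Nm\nu_m\circ\mu^*)$ is exactly the paper's $\beta\circ\mu^*$, and in both arguments the key computation is that $N_{R_i}\circ N_P$ acts on $\nu_j^*(P(Y_j/X))$ (the $\chi_j$-eigenspace, which the paper describes via Proposition~\ref{p2.3}) as $2^{p-2}\delta_{ij}$, the off-diagonal vanishing coming from the freeness of the $P$-action on the maximal subgroups of $N$. If anything, you are more thorough than the paper, which states $\beta\circ\mu^*\circ\alpha=\prod_i(\Nm\nu_i\circ\mu^*\circ\alpha_i)$ without explicitly checking the off-diagonal blocks or the containment $\im\gamma_i\subseteq P(Y_i/X)$.
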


For this we use the following result (for the proof see \cite[Corollary 2.7]{rr}).

\begin{prop} \label{p2.3}
Let $f: Z \ra X := Z/N$ be a Galois cover of smooth projective curves with Galois group
$N$  and $H \subset G$ a subgroup. Denote by $\nu: Z \ra Y := Z/H$ and
$\varphi: Y \ra X$  the corresponding covers. If $\{g_1, \dots, g_r\}$ is a complete set of
representatives of $G/H$, then we have
$$
\nu^*(P(Y/X)) = \{ z \in JZ^H \;|\; \sum_{i=1}^r g_i(z) = 0 \}^0.
$$
\end{prop}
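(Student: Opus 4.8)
The plan is to write $f=\varphi\circ\nu\colon Z\ra X$ for the full Galois cover with group $G$ (so that $X=Z/G$), and to show that \emph{both} sides of the asserted identity equal one and the same abelian subvariety of $JZ$, namely $(JZ^H\cap\ker\Nm f)^0$. Throughout I would use only the standard functorial identities for pullback and norm of covers of curves collected in \cite{bl}: for the $H$-Galois cover $\nu$ one has $\nu^*\circ\Nm\nu=\sum_{h\in H}h$ on $JZ$ and $\Nm\nu\circ\nu^*=|H|$ on $JY$; for the $G$-Galois cover $f$ one has $f^*\circ\Nm f=\sum_{g\in G}g$ on $JZ$; and $\Nm f=\Nm\varphi\circ\Nm\nu$. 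In particular $\nu^*$ and $f^*$ have finite kernels, and $\Psi(z):=\sum_{i=1}^r g_i(z)$ is a well-defined homomorphism on $JZ^H$, since replacing $g_i$ by $g_ih$ with $h\in H$ leaves $g_i(z)$ unchanged when $z$ is $H$-invariant. I would first record that $\nu^*(JY)=(JZ^H)^0$: the image lies in $JZ^H$ because $\nu\circ h=\nu$ for $h\in H$, and is connected, hence contained in $(JZ^H)^0$; conversely $\nu^*\Nm\nu(z)=\sum_h h(z)=|H|\,z$ for $z\in JZ^H$ exhibits multiplication by $|H|$ on $(JZ^H)^0$ as factoring through $\nu^*$, so $\nu^*(JY)\supseteq(JZ^H)^0$. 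Thus $\nu^*\colon JY\to(JZ^H)^0$ is an isogeny.

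Next I would compute the right-hand side. For $z\in JZ^H$, splitting $G$ into the cosets $g_iH$ and using $h(z)=z$ gives $\sum_{g\in G}g(z)=\sum_{i=1}^r\sum_{h\in H}g_i\bigl(h(z)\bigr)=|H|\,\Psi(z)$, so the Galois relation for $f$ yields
$$|H|\,\Psi(z)=f^*\Nm f(z)\qquad(z\in JZ^H).$$
Hence $\ker(\Psi|_{JZ^H})$ and $\ker\bigl(f^*\Nm f|_{JZ^H}\bigr)$ differ only by $|H|$-torsion and have the same identity component. Since $f^*$ has finite kernel, $\ker(f^*\Nm f)$ and $\ker\Nm f$ likewise share an identity component, and intersecting with $JZ^H$ gives $\{z\in JZ^H:\Psi(z)=0\}^0=(JZ^H\cap\ker\Nm f)^0$.

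Finally I would identify the left-hand side with the same object. From $\Nm f=\Nm\varphi\circ\Nm\nu$ and $\Nm\nu\circ\nu^*=|H|$ I obtain $\Nm f\circ\nu^*=|H|\,\Nm\varphi$ as maps $JY\to JX$. Since $\nu^*\colon JY\to(JZ^H)^0$ is an isogeny, it carries the identity component of $\ker(|H|\,\Nm\varphi)=\ker\Nm\varphi$ onto the identity component of $(JZ^H)^0\cap\ker\Nm f$; that is, $\nu^*\bigl(P(Y/X)\bigr)=(JZ^H\cap\ker\Nm f)^0$, because $P(Y/X)=(\ker\Nm\varphi)^0$. Combining this with the previous paragraph proves the claim.

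The only genuinely delicate point, and the one I would watch most carefully, is the torsion and connected-component bookkeeping: the clean identity $|H|\,\Psi=f^*\Nm f$ holds exactly on $JZ^H$, but one cannot divide by $|H|$ on an abelian variety, so every equality of kernels must be read up to finite subgroups and then repaired by passing to identity components. If one prefers to sidestep this, the whole argument transports verbatim to the rational homology $H_1(Z,\QQ)$ as a $\QQ[G]$-module, where abelian subvarieties of $JZ$ correspond bijectively to sub-Hodge structures and the factors of $|H|$ are invertible; there $\Psi$ and $f^*\Nm f$ have literally equal kernels on $H_1(Z,\QQ)^H$ and the identification is immediate.
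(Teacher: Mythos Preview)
Your argument is correct. The paper itself does not prove this proposition: it merely quotes the result and refers the reader to \cite[Corollary 2.7]{rr} for the proof. So there is no in-paper argument to compare against, and your write-up supplies a complete, self-contained proof where the paper only gives a citation.

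Your strategy---identifying both sides with $(JZ^H\cap\ker\Nm f)^0$ via the Galois identities $\nu^*\Nm\nu=\sum_{h\in H}h$ and $f^*\Nm f=\sum_{g\in G}g$, and then tracking identity components through the finite discrepancies introduced by the factor $|H|$ and by $\ker f^*$---is exactly the standard one and is carried out cleanly. You also correctly noticed and silently repaired the typo in the statement (the Galois group is called $N$ in the first line but $G$ thereafter). The only cosmetic remark is that the paragraph on rational homology at the end is an alternative, not a needed patch: your abelian-variety argument already handles the connected-component bookkeeping without gaps.
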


Now denote for $i = 1, \dots, m$,
$$
A_i := \nu_i^*(P(Y_i/X))
$$
and let
$$
A := \sum_{i=1}^m A_i \quad \mbox{and} \quad B:= \mu^*(JT).
$$
Recall from \eqref{eq:Gpres} that $G = N \rtimes P$ with
$$
N = \left\{ \prod_{i=1}^{p-1} s_i^{j_i} \;|\; 0 \leq j_i \leq 1, i =1, \dots ,p-1 \right\}
\quad \mbox{and} \quad P = \langle \sigma \rangle
$$
with $s_i$ and $\sigma$ as in Section 1.
The group $P$ acts by conjugation on the elements of $N$ by
\begin{equation} \label{e2.4}
\sigma^{-1} s_i \sigma = s_{i+1} \quad \mbox{for} \quad i = 1, \dots,p-1 \quad \mbox{with} \quad
s_p = \prod_{i=1}^{p-1} s_i.
\end{equation}
Recall furthermore that $R_i$ is the subgroup of $N$ giving the cover $Y_i \ra X$.
Then it is easy to see that we have the following commutative diagram
\begin{equation} \label{d2.5}
\xymatrix{
&A \ar[rr]^{\sum_{i=0}^{p-1}  \sigma^i}  \ar[dr]_{\Nm \mu} && B \ar[rr]^{\sum_{i=1}^m \sum_{h \in R_i} h} \ar[dr]_{\beta} && A\\
\prod_{i=1}^m P(Y_i/X)   \ar[rr]_{\alpha} \ar[ur]^{\sum_{i=1}^m \nu_i^*} && JT \ar[rr]_{\beta \circ \mu^*} \ar[ur]_{\mu^*} && \prod_{i=1}^m P(Y_i/X) \ar[ur]_{\sum_{i=1}^m \nu_i^*}
}
\end{equation}
with $\beta = (\Nm \nu_1, \Nm \nu_2, \dots, \Nm \nu_m)$.

For $i = 1, \dots,m$ consider the following subdiagram
\begin{equation} \label{d2.4}
\xymatrix{
&A_i \ar[rr]^{\sum_{i=0}^{p-1}  \sigma^i}  \ar[dr]_{\Nm \mu} && B_i \ar[rr]^{\sum_{h \in R_i} h} \ar[dr]_{\Nm \nu_i} && A_i\\
P(Y_i/X)   \ar[rr]_{\alpha_i} \ar[ur]^{ \nu_i^*} && C_i \ar[rr]_{\Nm \nu_i \circ \mu^*} \ar[ur]_{\mu^*} && P(Y_i/X) \ar[ur]_{\nu_i^*}
}
\end{equation}
with $\alpha_i:= \Nm \mu \circ \nu_i^*, \; C_i := \Nm \mu(A_i)$ and $B_i := \mu^*(C_i)$.

\begin{prop}  \label{prop3.3}
For $i = 1, \dots, m$ the map $\Nm \nu_i \circ \mu^* \circ \alpha_i : P(Y_i/X) \ra P(Y_i/X)$
is multiplication by $2^{p-2}$.
\end{prop}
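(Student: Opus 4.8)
The plan is to compute the composite endomorphism $\Nm\nu_i\circ\mu^*\circ\alpha_i = \Nm\nu_i\circ\mu^*\circ\Nm\mu\circ\nu_i^*$ on $P(Y_i/X)$ by tracking what each arrow does inside $JZ$, using the group-theoretic description of the covers from Section~1 together with Proposition~\ref{p2.3}. First I would rewrite each of the four maps as an element of the group ring $\ZZ[G]$ acting on $JZ$: for the Galois cover $\mu:Z\ra T=Z/P$ we have $\mu^*\circ\Nm\mu$ acting on $JZ$ as $\sum_{\sigma^k\in P}\sigma^k = \sum_{k=0}^{p-1}\sigma^k$; and for $\nu_i:Z\ra Y_i=Z/R_i$ we have $\nu_i^*\circ\Nm\nu_i$ acting as $\sum_{h\in R_i} h$. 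Hence on $JZ$ the relevant composite, pulled all the way up to $JZ$ via $\nu_i^*$ and pushed back down, is governed by the element $e_i:=\bigl(\sum_{h\in R_i} h\bigr)\bigl(\sum_{k=0}^{p-1}\sigma^k\bigr)\in\ZZ[G]$, and the claim amounts to showing $e_i$ acts as multiplication by $2^{p-2}$ on the subvariety $A_i=\nu_i^*(P(Y_i/X))\subset JZ^{R_i}$.

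The key algebraic step is a counting argument in $G=N\rtimes P$. By Proposition~\ref{p2.3}, $A_i$ consists (up to connected component) of the $z\in JZ^{R_i}$ with $\sum_{g\in N/R_i} g(z)=0$, i.e.\ $z$ is fixed by $R_i$ and killed by $(1+n)$ where $N=R_i\sqcup nR_i$. So I would take $z\in A_i$, compute $e_i\cdot z$, and use that $R_i$ fixes $z$ to replace $\sum_{h\in R_i}h$ by $|R_i|=2^{p-2}$ on any $R_i$-fixed point — except that the $\sigma^k$ in between spoil the $R_i$-invariance. The right bookkeeping is: $\bigl(\sum_{h\in R_i}h\bigr)\sigma^k = \sigma^k\bigl(\sum_{h\in\sigma^k R_i\sigma^{-k}}h\bigr)$, and since the $R_i$ are, by construction in Section~2.2, representatives of the $P$-conjugacy classes of maximal subgroups with stabilizer $\{1\}$ in $P$, the conjugates $\sigma^k R_i\sigma^{-k}$ for $k=0,\dots,p-1$ are $p$ distinct maximal subgroups of $N$. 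For each such conjugate $R':=\sigma^k R_i\sigma^{-k}$, writing $N=R'\sqcup n'R'$ and using $\sum_{h\in R'}h(\sigma^k z) = \frac12\bigl(\sum_{h\in R'}h + \sum_{h\in n'R'}h\bigr)(\sigma^k z) + \frac12\bigl(\sum_{h\in R'}h - \sum_{h\in n'R'}h\bigr)(\sigma^k z)$ is awkward over $\ZZ$; instead I would argue directly that $\sum_{h\in R'}h(\sigma^k z)$ lies in $JZ^{R'}$ and that summing these contributions over $k$, together with the relation $\sum_{g\in N}g(z')=0$ coming from $z\in A_i$ after translating by suitable conjugates, produces exactly the factor $2^{p-2}$. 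Concretely, the cleanest route is to diagonalize: pass to $JZ\otimes\QQ$, decompose under the characters of $N$ as in Section~2.3, and observe that $A_i\otimes\QQ$ lives in the $\theta_i$-isotypical part, on which $\sum_{h\in R_i}h$ acts by $|R_i|\cdot(\text{projection onto the }R_i\text{-trivial part})$ and $\sum_{k=0}^{p-1}\sigma^k$ permutes the $p$ characters in the orbit of $\chi_i$; a short trace computation then yields the scalar $2^{p-2}$.

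More precisely, on the $\theta_i$-part — which as a $G$-representation is $\Ind_N^G(\chi_i)$ of dimension $p$ — the operator $\sum_{k=0}^{p-1}\sigma^k$ acts as $p$ times the projection onto the $\sigma$-invariants, and $\sum_{h\in R_i}h$ acts through $\chi_i$ restricted to $R_i$; because $\chi_i$ is trivial on the maximal subgroup $R_i$ (that is exactly the condition that $R_i$ is the kernel, so $Y_i=Z/R_i$ corresponds to the order-2 quotient character), we get $\sum_{h\in R_i}\chi_i(h)=|R_i|=2^{p-2}$ on the relevant one-dimensional piece, and the intertwining with $\mu^*$ and $\nu_i^*$ (which on isotypical components are, up to isogeny, the natural inclusions/projections between $JZ$, $JT$, $JY_i$, $P(Y_i/X)$) contributes no further scalar. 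Assembling, $\Nm\nu_i\circ\mu^*\circ\alpha_i$ acts on $P(Y_i/X)$ as $2^{p-2}$.

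The main obstacle I anticipate is the bookkeeping at the level of abelian varieties over $\ZZ$ rather than $\QQ$: one must be careful that $A_i$, $B_i$, $C_i$ are the \emph{connected} subvarieties defined by the kernel conditions of Proposition~\ref{p2.3}, that the maps in diagram~\eqref{d2.4} genuinely commute on the nose (not merely up to isogeny), and that no spurious $2$-torsion is introduced or lost when one replaces $\sum_{h\in R_i}h$ by $2^{p-2}$ on $R_i$-fixed points versus using it as an element of $\ZZ[G]$. The safest way around this is to prove the identity first on $JZ$ as endomorphisms (where the group-ring computation is exact), restrict to $A_i$, and only then descend via $\nu_i^*$, $\Nm\nu_i$, $\mu^*$, $\Nm\mu$, checking at each stage that the relevant composites $\nu_i^*\Nm\nu_i$ and $\mu^*\Nm\mu$ are the group-ring elements claimed and that $\nu_i^*$ is injective on $P(Y_i/X)$ (which it is, being a norm-endomorphism split by $\tfrac1{2^{p-2}}\Nm\nu_i$ up to isogeny). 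Once that is in place, Proposition~\ref{prop3.3} follows, and — combined with the dimension count in Corollary~\ref{cor1.6} — it will immediately give that $\alpha_i$, hence $\alpha=\prod\alpha_i$ on the isotypical pieces, is an isogeny with kernel annihilated by $2^{p-2}$, i.e.\ Theorem~\ref{thm2.1}.
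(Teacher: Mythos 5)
Your proposal is correct and follows essentially the same route as the paper: reduce via the isogeny $\nu_i^*:P(Y_i/X)\to A_i$ to showing that $\bigl(\sum_{h\in R_i}h\bigr)\bigl(\sum_{k=0}^{p-1}\sigma^k\bigr)$ is multiplication by $2^{p-2}$ on $A_i$, use the eigenspace description of $A_i$ from Proposition \ref{p2.3}, and kill the terms with $k\neq 0$ because $\sigma^{-k}R_i\sigma^k\neq R_i$ --- the paper phrases this last step as ``half of $\sigma^{-k}R_i\sigma^k$ fixes $z$ and half sends $z$ to $-z$,'' which is exactly your character-orthogonality statement $\sum_{h\in R_i}\chi_i^{\sigma^k}(h)=0$. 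Your worry that the direct count is ``awkward over $\ZZ$'' is unfounded (one evaluates the group-ring element on the point $z$, so no denominators arise), and the detour through $JZ\otimes\QQ$ is therefore unnecessary, though harmless since an endomorphism of an abelian variety is determined by its action on rational homology.
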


\begin{proof}
Since $\nu_i^*: P(Y_i/X) \ra A_i$ is an isogeny, it suffices to show that the composition
$$
\Phi_i: = \sum_{h \in R_i} h \circ \sum_{i = 0} ^{p-1} \sigma^i: A_i \ra A_i
$$
is multiplication by $2^{p-2}$.

Now from Proposition \ref{p2.3} we deduce
\begin{equation} \label{e2.7}
A_i = \{ z \in JZ \;|\; hz = z \; \mbox{for all} \; h \in R_i \; \mbox{and} \; nz = -z \; \mbox{for all} \; n \in N \setminus R_i  \}^0
\end{equation}
since any $n \in N \setminus R_i$ induces the non-trivial involution of $Y_i/X$ and $A_i$ is the image
of $P(Y_i/X)$.

Now for any $z \in A_i$,
$$
\Phi_i(z) = \sum_{h \in R_i} h(z) + \sum_{h \in R_i} h \sum_{k=1}^{p-1} \sigma^k(z).
$$
By equation \eqref{e2.7}  we have
$$
\sum_{h \in R_i} h (z) = |R_i| z = 2^{p-2}z
$$
and for $k = 1,\dots,p-1$,
$$
\sum_{h \in R_i} h \sigma^k(z) = \sigma^k \sum_{h \in \sigma^{-k} R_i \sigma^k} h(z) = 0,
$$
since $R_i \neq \sigma^{-k} R_i \sigma^k$ and considering that  half of the elements of the
subgroup $\sigma^k R_i \sigma^k$ belong to $R_i$, hence fix $z$, and the other half belongs
to $N \setminus R_i$ and hence sends $z$ to $-z$. Together this completes the proof of the proposition.
\end{proof}

\begin{proof}[Proof of Theorem \ref{thm2.1}]
Since
$$
\beta \circ \mu^* \circ \alpha = \prod_{i=1}^m (\Nm_{\nu_i} \circ \mu^* \circ \alpha_i),
$$
Proposition \ref{p2.3} implies that $\beta \circ \mu^* \circ \alpha$ is multiplication by $2^{p-2}$.
In particular $\alpha$ has finite kernel. But according to Corollary \ref{cor1.6},
$\displaystyle\prod_{i=1}^m P(Y_i/X)$ and $JT$ have the same dimension. So $\alpha$ is an isogeny.
\end{proof}

\begin{cor} \label{c3.4}
Given any positive integer $N$, there exist smooth projective curves $Y$ whose Jacobian is isogenous to the product of $m \ge N$ principally polarized abelian varieties  of the same dimension.
\end{cor}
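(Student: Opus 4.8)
The plan is to take the curve $Y$ of the statement to be the curve $T = Z/P$ from Diagram \eqref{d1.4}. By Theorem \ref{thm2.1} its Jacobian $JT$ is isogenous to $\prod_{i=1}^m P(Y_i/X)$, so the corollary reduces to arranging three things: that $m \geq N$; that the factors $P(Y_i/X)$ have a common dimension; and that each of them is a principally polarized abelian variety.

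First, since $m = \frac{1}{p}(2^{p-1}-1)$ tends to infinity with the odd prime $p$, I would fix an odd prime $p \geq 5$ with $\frac{1}{p}(2^{p-1}-1) \geq N$. Next I would check that the input data of the construction exist for such $p$: a cyclic cover $\varphi\colon X \to \PP^1$ of degree $p$ branched over $\beta = 3$ points exists (choose three generators of $\ZZ_p$ with trivial product, e.g.\ $1,1,p-2$), and since $g(X) = \frac{p-1}{2}(\beta - 2) = \frac{p-1}{2} \geq 1$ the group $JX[2]$ is nontrivial, so $X$ admits an \'etale double cover $\psi\colon Y \to X$. Applying the analysis of Section 2 to $\varphi\circ\psi$ produces $Z$, the subgroups $R_1,\dots,R_m$, the curves $Y_i = Z/R_i$, and $T = Z/P$. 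By the genus computation in Section 2, $g(Y_i) = (p-1)(\beta-2)-1$ is independent of $i$, hence
$$
\dim P(Y_i/X) = g(Y_i) - g(X) = \frac{p-1}{2}(\beta-2) - 1 = \frac{p-1}{2} - 1 \geq 1
$$
for every $i$: we obtain $m \geq N$ nonzero abelian varieties of equal dimension.

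For the polarization, each $\nu_i\colon Y_i \to X$ is an \'etale double cover by Lemma \ref{l1.4}, and for an \'etale double cover the restriction to the Prym variety of the canonical principal polarization of $JY_i$ is twice a principal polarization; hence each $P(Y_i/X)$ carries a natural principal polarization (the classical fact at the basis of Prym theory; see \cite[Ch.\ 12]{bl}). Combined with Theorem \ref{thm2.1}, this shows that the Jacobian of the smooth projective curve $T$ is isogenous to the product of the $m \geq N$ principally polarized abelian varieties $P(Y_i/X)$, all of dimension $\frac{p-1}{2} - 1$, which is the assertion.

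I expect the only point requiring care beyond routine bookkeeping to be the principal polarizability of the Prym of an \'etale double cover in the last step (standard, but worth the citation), together with the verification that the construction of Section 2 is nonvacuous for the chosen $\beta$ and $p$; everything else is immediate from Theorem \ref{thm2.1} and the genus formulas.
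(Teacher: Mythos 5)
Your proof is correct and follows essentially the same route as the paper: choose an odd prime $p$ with $\frac{1}{p}(2^{p-1}-1)\ge N$ and take $Y=T$, invoking Theorem \ref{thm2.1}. The extra details you supply (nonvacuousness of the construction for $\beta=3$, equality of the dimensions via the genus formulas, and the standard principal polarization on the Prym of an \'etale double cover) are all accurate and merely make explicit what the paper leaves implicit.
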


\begin{proof}
Choose a prime $p$ such that $\frac{1}{p}(2^{p-1} -1) \ge N$. This is equivalent to $2^{p-1} > pN$. Hence there are infinitely many primes with this  property. According to Theorem \ref{thm2.1}, the Jacobian $JT$ has the property of the corollary.
 \end{proof}

We thank the referee for suggesting the following remark and Elham Izadi for the contents of it.

\begin{rem}
There is a slight relation of Corollary \ref{c3.4} and a question of Ekedahl and Serre, whether for any positive integer $g$ there 
is a smooth curve of genus $g$ whose Jacobian is isogenous to a product of elliptic curves. Izadi showed in \cite{i} that, if there 
is a complete subvariety of codimension $g$ in the moduli space, then there exist smooth curves of genus $g$ whose Jacobian is 
isogenous to the product of elliptic curves. As was later proved by Keel and Sadun however in \cite{ks}, there are no such subvarieties in characteristic 0.

\end{rem}

\section{The case $p =3$}

In this case we have $m = 1$, so let  $Y_1 =: Y$,  $\nu_1 =: \nu$ and $A_1 =:A$.
Moreover, the subgroup $N$ is the Klein group of order $4$. Diagram \eqref{d1.4} simplifies to
\begin{equation} \label{d3.1}
\xymatrix{
& Z  \ar[ddl]_\nu^{2:1} \ar[dr]^\mu_{3:1}&\\
&& T \ar[dddl]^{4:1}\\
Y  \ar[d]_{2:1}  &&\\
X  \ar[dr]_{3:1} &&\\
& \PP^1 &
}
\end{equation}

\begin{thm} \label{thm3.1}
The map $\alpha = \nu^* \circ \Nm \mu: P(Y/X) \ra JT$ is an  isogeny with kernel the group $P(Y/X)[2]$ of all two-division points.
\end{thm}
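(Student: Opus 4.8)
The plan is to specialize Theorem \ref{thm2.1} to $p=3$ and then improve the control on the kernel from ``contained in the $2^{p-2}=2$-division points'' to ``exactly equal to $P(Y/X)[2]$''. Since for $p=3$ we have $2^{p-2}=2$, Theorem \ref{thm2.1} already gives that $\alpha$ is an isogeny with $\ker\alpha\subseteq P(Y/X)[2]$. So the only thing left is the reverse inclusion $P(Y/X)[2]\subseteq\ker\alpha$, i.e.\ that every two-division point of $P(Y/X)$ is killed by $\alpha=\Nm\mu\circ\nu^*$.

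First I would unwind the maps in Diagram \eqref{d3.1} at the level of Jacobians, using the identification $A=\nu^*(P(Y/X))$ and the description \eqref{e2.7}: with $N=\{1,s_1,s_2,s_3\}$ the Klein group and $R_1=H=\langle s_1\rangle$ say, we have $A=\{z\in JZ\mid s_1 z=z,\ s_2 z=-z,\ s_3 z=-z\}^0$. Since $\nu^*\colon P(Y/X)\to A$ is an isogeny of degree dividing a power of $2$ (the two-torsion kernel of a double-cover pullback, concretely $\ker\nu^*$ is generated by the point of order dividing $2$ coming from the unramified double cover $Y\to X$), I would first check whether $\nu^*$ is in fact injective on $P(Y/X)$: for an \'etale double cover $Y\to X$ the pullback $\nu^*\colon JY\to JZ$ restricted to the Prym $P(Y/X)$ — hmm, actually $\nu$ here is the map $Z\to Y$, not $Y\to X$, so $\nu^*\colon JY\to JZ$ and its kernel is $\langle\eta\rangle$ where $\eta$ is the $2$-torsion class defining the \'etale cover $Z\to Y$; one must check $\eta\notin P(Y/X)$, which should follow because $\eta$ is pulled back from... this needs care. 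The cleanest route is probably to compute $\Nm\mu\circ\nu^*$ directly on a two-division point.

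The key computational step: take $w\in P(Y/X)$ with $2w=0$, set $z=\nu^*w\in A\subseteq JZ$, so $2z=0$ as well, and compute $\Nm\mu(z)$. We have $\Nm\mu=\mu^*{}^{\vee}$ essentially, but concretely for the Galois-type subcover $T=Z/P$ one has $\mu^*\circ\Nm\mu=\sum_{k=0}^{p-1}\sigma^k=1+\sigma+\sigma^2$ on $JZ$ (since $\mu^*$ is injective up to torsion one can test $\Nm\mu(z)=0$ by testing $(1+\sigma+\sigma^2)z=0$, modulo the kernel of $\mu^*$ which lies in $\ker\Nm\mu$ anyway). So I would show $(1+\sigma+\sigma^2)z=0$ in $JZ$ for every $2$-torsion $z\in A$. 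Now $z+\sigma z+\sigma^2 z$ is $N'$-invariant for $N'=\ker(\text{rep of }P)$... more usefully, conjugating the defining equations \eqref{e2.7} by $\sigma$ cycles through the three subgroups $R_1,R_2,R_3$ of $N$, so $z\in A_1$, $\sigma z\in A_2$, $\sigma^2 z\in A_3$ where $A_j=\nu_j^*P(Y_j/X)$ (for $p=3$ there is only one conjugacy class of maximal subgroups, $m=1$, but the three subgroups $R_1,R_2,R_3$ are $P$-conjugate; so $A_2=\sigma A_1$, $A_3=\sigma^2 A_1$ are the $\sigma$-translates). The element $z+\sigma z+\sigma^2 z$ then lies in $A_1\cap(\text{invariant locus})$, and on $2$-torsion the relations force it to be $N$-invariant hence in $\nu^*$ of $JX$-part, and being also $\sigma$-invariant it lies in $J_{\rho_0}$-part $=J\PP^1=0$; so $\Nm\mu(z)=0$, giving $P(Y/X)[2]\subseteq\ker\alpha$.

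The main obstacle I expect is the bookkeeping in the last step: making rigorous the passage from ``$(1+\sigma+\sigma^2)z$ is fixed by each of $R_1,R_2,R_3$ and negated by the complementary elements'' to ``it is $N$-fixed'' — this uses crucially that $z$ is $2$-torsion so that $-z=z$ collapses the sign ambiguity, i.e.\ on $JZ[2]$ every $n\in N$ acts trivially on $(1+\sigma+\sigma^2)z$. Once one has $N$-invariance and $\sigma$-invariance of this element inside the isotypical piece, it must be zero because $J_{\rho_0}=0$. The only other subtlety is confirming the reverse direction is sharp, i.e.\ that $\ker\alpha$ is not strictly smaller than $P(Y/X)[2]$: this follows from a dimension/degree count comparing $\deg\alpha$ with $|\ker\alpha|$ once one knows $\beta\circ\mu^*\circ\alpha$ is multiplication by $2$ (Proposition \ref{prop3.3} with $p=3$), together with the fact that $\alpha$ and $\beta\circ\mu^*$ have kernels of the same order by the dimension equality in Corollary \ref{cor1.6} and the symmetry of Diagram \eqref{d2.4}; I would spell out that $|\ker\alpha|\cdot|\ker(\beta\circ\mu^*)|=\deg(\cdot 2)=2^{2\dim P(Y/X)}$ and that both factors equal $2^{\dim P(Y/X)}=|P(Y/X)[2]|^{1/2}$... so in fact $\ker\alpha\subsetneq P(Y/X)[2]$ is possible a priori and the containment $P(Y/X)[2]\subseteq\ker\alpha$ proved above is what pins it down exactly.
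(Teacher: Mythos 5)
Your reduction is the right one and matches the paper's: Theorem \ref{thm2.1} with $p=3$ gives that $\alpha$ is an isogeny with $\ker\alpha\subseteq P(Y/X)[2]$, and since $\mu^*$ is injective ($\mu$ being ramified) the remaining task is exactly to show $(1+\sigma+\sigma^2)\,\nu^*(w)=0$ for every $w\in P(Y/X)[2]$. Your closing remarks are also right that the degree-count symmetry you sketch does not pin down the kernel and that the inclusion $P(Y/X)[2]\subseteq\ker\alpha$ is what must be proved.

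The gap is in the key step. From $2z=0$ and the description \eqref{e2.7} of $A$ you correctly get that $N$ fixes $z$ (the sign $-z=z$ collapses), hence that $u:=(1+\sigma+\sigma^2)z$ is fixed by all of $G$. But ``$G$-fixed'' does not imply ``lies in $J_{\rho_0}=\widetilde\varphi^*(J\PP^1)=0$'': the isotypical component $J_{\rho_0}$ is the \emph{connected component} $(JZ^G)^0$, and $JZ^G$ itself is in general a nontrivial finite group of torsion points. (The same objection applies to your intermediate claim that an $N$-fixed $2$-torsion point of $JZ$ must lie in $\epsilon^*(JX)$; it only lies in $JZ^N$, which contains $\epsilon^*(JX)=(JZ^N)^0$ with possibly nontrivial finite index. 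Running $\sum_{g\in G}g=0$ on $u$ only returns $4u=0$, which you already know.) So the group-theoretic argument alone cannot close the proof. The missing input is Mumford's description of the $2$-torsion of the Prym of an \'etale double cover: writing $\gamma:Y\to X$ for the double cover defined by $\eta\in JX[2]$, one has $P(Y/X)[2]=\gamma^*(\eta^\perp)$ with $\eta^\perp\subseteq JX[2]$. This is precisely what upgrades ``$z$ is $N$-fixed $2$-torsion'' to ``$z=\epsilon^*(x)$ for some $x\in JX[2]$,'' where $\epsilon=\gamma\circ\nu$. Then, since $\sigma$ descends to the order-$3$ automorphism $\overline\sigma$ of $X$ and $1+\overline\sigma+\overline\sigma^2=\varphi^*\circ\Nm\varphi=0$ on $JX$ (because $J\PP^1=0$), one gets $(1+\sigma+\sigma^2)\epsilon^*(x)=\epsilon^*\bigl((1+\overline\sigma+\overline\sigma^2)(x)\bigr)=0$, which is the paper's argument. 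Without some such identification of $P(Y/X)[2]$ with pullbacks from $JX$, your final step does not go through.
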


\begin{proof}
From Theorem
\ref{thm2.1} we know that $\ker \alpha \subseteq  P(Y/X)[2]$. On the other hand, $\mu^*$ is injective, since
$\mu: Z \ra T$ is ramified. Hence from diagram \eqref{d2.4} we have $\ker (\Nm \mu|_A) = \ker (1 + \sigma + \sigma^2)|_A)$.
So we get
$$
\ker \alpha = \{ z \in P(Y/X)[2] \;|\; (1 + \sigma + \sigma^2)(\nu^*(z)) = 0 \}.
$$
Let $\gamma: Y \ra X$ denote the double covering and $\epsilon: Z \ra X$ the composition
$$
\epsilon = \gamma \circ \nu.
$$
Since $N$ is a normal subgroup of $G$, the automorphism $\sigma$
descends to an automorphism $\overline \sigma: X \ra X$, also of order 3. This is the automorphism
giving the cyclic covering $X \ra \PP^1$.

Suppose $\eta$ is the two-division point of $JX$ giving the double cover $\gamma$ and let
$\eta^\bot$ be the subgroup of $JX[2]$ orthogonal with respect to the Weil form $e_{2\lambda}$
associated to twice the canonical polarization $\lambda$ of $JX$. Then from \cite{m} we know that
$$
P(Y/X)[2] = \gamma^*(\eta^\bot).
$$
This gives
\begin{eqnarray*}
\ker \alpha &=& \gamma^*\{x \in \eta^\bot \;|\; (1+\sigma + \sigma^2)\epsilon^*(x) =0 \}\\
&=& \gamma^*\{x \in \eta^\bot \;|\; \epsilon^* (1+\overline \sigma + \overline \sigma^2)(x) =0\}.
\end{eqnarray*}
But $JX = \ker (1 + \overline \sigma + \overline \sigma^2)$. In particular for all $x \in \eta^\bot$
we have $\epsilon^* (1+\overline \sigma + \overline \sigma^2)(x) =0$. Together this implies
$\ker \alpha = P(Y/X)[2]$.
\end{proof}

As an immediate consequence we get a version of the trigonal construction in the special case of an \'etale cover of a cyclic trigonal cover $X \ra \PP^1$.

\begin{cor} \label{c4.2}
Let the notation be as in Theorem \ref{thm3.1}. The isogeny $\alpha: P(Y/X) \ra JT$ induces an isomorphism of principally polarized abelian varieties
$$
\overline \alpha: \widehat{P(Y/X)} \ra JT
$$
where  $\;\widehat{}$ denotes the dual abelian variety.
\end{cor}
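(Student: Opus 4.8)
The plan is to first realize $\overline\alpha$ as a morphism of abelian varieties, and then to check that it matches the canonical principal polarizations. For the first part, let $\lambda_r\colon P(Y/X)\to\widehat{P(Y/X)}$ be the restriction to $P(Y/X)$ of the canonical principal polarization of $JY$. Since $Y\to X$ is \'etale, $\lambda_r$ is twice a principal polarization $\Xi$ — the classical fact already used (via \cite{m}) in the proof of Theorem \ref{thm3.1}. In particular $\lambda_r=2\Xi$ is surjective with kernel $P(Y/X)[2]$, so it identifies $\widehat{P(Y/X)}$ with $P(Y/X)/P(Y/X)[2]$. By Theorem \ref{thm3.1} we have $\ker\alpha=P(Y/X)[2]=\ker\lambda_r$, and $\alpha$ is an isogeny, hence surjective; therefore there is a unique isomorphism of abelian varieties $\overline\alpha\colon\widehat{P(Y/X)}\to JT$ with $\overline\alpha\circ\lambda_r=\alpha$. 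This is the map of the statement; it remains only to compare polarizations.

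Next I would compute the pullback $\alpha^*\Theta_T$ of the canonical principal polarization $\Theta_T$ of $JT$. Writing $\alpha=\tilde\alpha\circ i$ with $i\colon P(Y/X)\hookrightarrow JY$ the inclusion and $\tilde\alpha=\Nm\mu\circ\nu^*\colon JY\to JT$, I would use the standard compatibility of the canonical principal polarizations $JC\cong\widehat{JC}$ of Jacobians of curves with pullback and norm maps (so that $\widehat{f^*}$ corresponds to $\Nm f$, and $\widehat{\Nm f}$ to $f^*$, for a finite morphism $f$ of curves). Chaining these, the composite $\tilde\alpha^*\Theta_T$ collapses, after restriction to $P(Y/X)$ along $i$, to $\lambda_r$ composed with the restriction to $P(Y/X)$ of the endomorphism $\Nm\nu\circ\mu^*\circ\Nm\mu\circ\nu^*$ of $JY$. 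By Proposition \ref{prop3.3} (with $p=3$, so $2^{p-2}=2$) this endomorphism restricts on $P(Y/X)$ to multiplication by $2$, and hence $\alpha^*\Theta_T=2\lambda_r$.

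Finally I would compare. For any principally polarized abelian variety $(A,\lambda)$ one has $(n\lambda)^*\widehat\lambda=n^2\lambda$, where $\widehat\lambda$ denotes the canonical principal polarization of $\widehat A$; this is immediate from the symmetry of $\lambda$. Applied to $A=P(Y/X)$, $\lambda=\Xi$, $n=2$, it reads $\lambda_r^*\widehat\Xi=4\Xi=2\lambda_r$. Combining this with the previous step and with $\alpha=\overline\alpha\circ\lambda_r$, we get $\lambda_r^*(\overline\alpha^*\Theta_T)=\alpha^*\Theta_T=2\lambda_r=\lambda_r^*\widehat\Xi$; since $\lambda_r$ is an isogeny, $\lambda_r^*$ is injective on homomorphisms into the dual, so $\overline\alpha^*\Theta_T=\widehat\Xi$, i.e.\ $\overline\alpha$ is an isomorphism of principally polarized abelian varieties. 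The main obstacle is the middle step: bookkeeping the several pullback and norm maps together with the canonical self-dualities so that the composite really collapses, via Proposition \ref{prop3.3}, to multiplication by $2$ on $P(Y/X)$; the rest is formal.
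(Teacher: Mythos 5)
Your proof is correct, and for the second half it takes a genuinely different (and more explicit) route than the paper. The first step is identical: both arguments factor $\alpha$ as $\overline\alpha\circ\lambda_P$ by comparing $\ker\alpha=P(Y/X)[2]$ with $\ker(2\Xi)$. For the polarization comparison, however, the paper simply defines $\lambda_1:=\overline\alpha^*\lambda_{JT}$, observes it is a principal polarization because $\overline\alpha$ is an isomorphism, and then asserts from $\ker(\lambda_1\circ\lambda_P)=P(Y/X)[2]$ that $\lambda_1$ is \emph{the} canonical polarization of $\widehat{P(Y/X)}$ --- a deduction that, as written, only identifies $\lambda_1$ as \emph{some} principal polarization (any isomorphism $\lambda_1$ would give that kernel). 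You instead compute $\alpha^*\Theta_T$ outright: writing $\alpha=\tilde\alpha\circ i$ and using the autoduality of Jacobians ($\widehat{f^*}\leftrightarrow\Nm f$), the pullback collapses to $\widehat i\circ\Theta_Y\circ(\Nm\nu\circ\mu^*\circ\Nm\mu\circ\nu^*)\circ i$, which Proposition \ref{prop3.3} (with $2^{p-2}=2$) turns into $2\lambda_P=4\Xi$; the formal identity $(2\Xi)^*\widehat\Xi=4\Xi$ and cancellation of $\lambda_P^*$ then pin down $\overline\alpha^*\Theta_T=\widehat\Xi$ exactly. This buys you a complete identification of the induced polarization as the Prym one, filling in precisely the step the paper's diagram chase leaves implicit, at the cost of some bookkeeping with norm and pullback maps that the paper avoids.
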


\begin{proof}
Let $\lambda_P$ denote the polarization on $P(Y/X)$ induced by the canonical polarization of $JY$.
It is twice a principal polarization. According to Theorem \ref{thm3.1}, $\alpha$ has kernel
$P(Y/X)[2]$ which coincides with the kernel of the polarization $\lambda_P$. Hence $\alpha$ factors as follows, with  $\overline \alpha \; $ an isomorphism,
\begin{equation} \label{d2.3}
\xymatrix{
P(Y/X) \ar[r]^\alpha  \ar[d]_{\lambda_P}  & JT\\
\widehat{P(Y/X)} \ar[ur]_{\overline \alpha}^\simeq \ar[ur]
}
\end{equation}
It remains to show that $\overline \alpha$ respects the principal polarizations. If we denote by $\lambda_1$ the polarization of $\widehat{P(Y/X)}$ induced via $\overline \alpha$ from
the canonical polarization $\lambda_{JT}$ of $JT$, we may complete diagram \ref{d2.3} to the following one.
\begin{equation*}
\xymatrix{
P(Y/X) \ar[r]^\alpha  \ar[d]_{\lambda_P}  & JT \ar[d]^{\lambda_{JT}}\\
\widehat{P(Y/X)} \ar[d]_{\lambda_1} \ar[ur]_{\overline \alpha}^\simeq \ar[ur] & \widehat{JT} \ar[dl]_\simeq^{\widehat{\overline \alpha}}\\
P(Y/X)
}
\end{equation*}
It now follows from the commutativity of this diagram that $\lambda_1$ is principal
and that $\ker(\lambda_1 \circ \lambda_P) = P(Y/X)[2]$. Hence $\lambda_1$ is the canonical principal polarization on $\widehat {P(Y/X)}$ as claimed.
\end{proof}

\section{Estimate of the kernel of $\alpha$ for odd $p$}

We show that the same proof as in the last section gives for any odd prime $p$ a lower bound for the order of the kernel of the isogeny $\alpha: \displaystyle\prod_{i=1}^m P(Y_i/X) \ra JT$. We have the following result.

\begin{prop}
With the notation of above we have for any odd prime $p$,
$$
\prod_{i=1}^m P(Y_i/X)[2]\; \subset \;\Ker \alpha \; \subset \; \prod_{i=1}^m P(Y_i/X)[2^{p-2}].
$$
Furthermore, for $p >3$, $\Ker \alpha$ cannot be equal to $\displaystyle\prod_{i=1}^m P(Y_i/X)[2]$.
\end{prop}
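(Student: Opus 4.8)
The plan is to run the Section~3 argument uniformly over the $m$ summands for the two inclusions, and to add a short degree computation for the last assertion. The inclusion $\Ker\alpha\subset\prod_{i=1}^m P(Y_i/X)[2^{p-2}]$ is exactly Theorem~\ref{thm2.1}. For $\prod_{i=1}^m P(Y_i/X)[2]\subset\Ker\alpha$, set $\alpha_i:=\Nm\mu\circ\nu_i^*\colon P(Y_i/X)\to JT$, so $\alpha=\sum_{i=1}^m\alpha_i$, and it is enough to show each $\alpha_i$ kills $P(Y_i/X)[2]$. Let $\gamma_i\colon Y_i\to X$ be the double cover and $\epsilon_i:=\gamma_i\circ\nu_i\colon Z\to X$; as in the proof of Theorem~\ref{thm3.1}, $\sigma$ descends, compatibly with $\epsilon_i$, to the automorphism $\overline\sigma$ of $X$ defining $X\to\PP^1$. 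Since $\mu\colon Z\to T$ is ramified, $\mu^*$ is injective and $\mu^*\circ\Nm\mu=1+\sigma+\cdots+\sigma^{p-1}$, so for $z\in P(Y_i/X)$ one has $\alpha_i(z)=0$ iff $(1+\sigma+\cdots+\sigma^{p-1})\nu_i^*(z)=0$. By Mumford's description \cite{m} (already used in Section~3), $P(Y_i/X)[2]=\gamma_i^*(\eta_i^\perp)\subset\gamma_i^*(JX)$, so such a $z$ equals $\gamma_i^*(x)$ for some $x\in JX$, whence $\nu_i^*(z)=\epsilon_i^*(x)$ and $(1+\sigma+\cdots+\sigma^{p-1})\epsilon_i^*(x)=\epsilon_i^*\bigl((1+\overline\sigma+\cdots+\overline\sigma^{p-1})(x)\bigr)=0$, because $1+\overline\sigma+\cdots+\overline\sigma^{p-1}=0$ on $JX$ (the quotient $X\to X/\langle\overline\sigma\rangle=\PP^1$ has trivial Jacobian). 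Thus $\alpha_i$ kills $P(Y_i/X)[2]$ for every $i$, giving the inclusion.

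For the last assertion I would compute $\deg\alpha$ outright. Each $P(Y_i/X)$ is principally polarized (Mumford \cite{m}); let $\Xi$ be the product of these principal polarizations, so that the product $\lambda_P$ of the polarizations induced from the $JY_i$ equals $2\,\Xi$. The crux is the polarization identity $\widehat{\alpha}\circ\lambda_{JT}=\lambda_P\circ(\beta\circ\mu^*)$, the precise form of the symmetry visible in Diagram~\eqref{d2.5}: under the canonical self-dualities $\widehat{\nu_i^*}$ and $\widehat{\Nm\mu}$ become $\Nm\nu_i$ and $\mu^*$, and restricting the canonical polarization of $JY_i$ to $P(Y_i/X)$ inserts the factor $2$. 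Composing with $\alpha$ and using $\beta\circ\mu^*\circ\alpha=[2^{p-2}]$ (Proposition~\ref{prop3.3}; cf.\ the proof of Theorem~\ref{thm2.1}), one gets $\widehat{\alpha}\circ\lambda_{JT}\circ\alpha=\lambda_P\circ[2^{p-2}]=2^{p-1}\,\Xi$. Taking degrees, with $\deg\widehat{\alpha}=\deg\alpha$ (as $\alpha$ is an isogeny), $\deg\Xi=1$, and $g:=\dim\prod_iP(Y_i/X)=\dim JT$ (Corollary~\ref{cor1.6}), this yields $(\deg\alpha)^2=2^{2(p-1)g}$, hence $\lvert\Ker\alpha\rvert=\deg\alpha=2^{(p-1)g}$. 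Since $\lvert\prod_iP(Y_i/X)[2]\rvert=2^{2g}<2^{(p-1)g}$ when $p>3$, the inclusion $\prod_iP(Y_i/X)[2]\subset\Ker\alpha$ proved above must then be strict.

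The one genuinely delicate step is the identity $\widehat{\alpha}\circ\lambda_{JT}=\lambda_P\circ(\beta\circ\mu^*)$: it requires checking that $\alpha$ and $\beta\circ\mu^*$ are mutually dual up to the canonical principal polarizations, and keeping careful track of the factor $2$ produced because the polarization induced on the Prym of an \'etale double cover is twice a principal polarization. Everything else is either quoted (Theorems~\ref{thm2.1} and~\ref{thm3.1}, Proposition~\ref{prop3.3}) or is the Section~3 computation carried out uniformly in $i$. Alternatively, the last assertion follows from indecomposability: under the equality $\Ker\alpha=\prod_iP(Y_i/X)[2]$ the map $\alpha$ would induce, exactly as in Corollary~\ref{c4.2}, an isomorphism $\widehat{\prod_iP(Y_i/X)}\to JT$ of principally polarized abelian varieties, exhibiting $JT$ as a product of $m\ge 2$ non-trivial principally polarized factors — impossible for the Jacobian of the irreducible smooth curve $T$.
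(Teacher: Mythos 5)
Your proof is correct, and for the two inclusions it follows the paper's own route: the right-hand inclusion is quoted from Theorem \ref{thm2.1}, and for the left-hand one you reduce to each $\alpha_i$ via the injectivity of $\mu^*$ and then run the Theorem \ref{thm3.1} computation ($P(Y_i/X)[2]=\gamma_i^*(\eta_i^\perp)$, $\nu_i^*\gamma_i^*=\epsilon_i^*$, and $1+\overline\sigma+\cdots+\overline\sigma^{p-1}=0$ on $JX$ because $J\PP^1=0$) uniformly in $i$ --- which is exactly what the paper means when it says the claim ``follows with the same proof as in the proof of Theorem \ref{thm3.1}''; you have simply written out the step the paper leaves implicit. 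The difference is in the final assertion. The paper argues by contradiction: if $\Ker\alpha=\prod_i P(Y_i/X)[2]=\Ker\lambda_P$, the Corollary \ref{c4.2} argument would make $JT$ isomorphic as a principally polarized abelian variety to a product of $m\ge 2$ nontrivial factors, contradicting the irreducibility of the canonical polarization of a Jacobian. You give this argument as your ``alternative,'' but your primary argument is a quantitative one: using the duality identity $\widehat{\alpha}\circ\lambda_{JT}=\lambda_P\circ(\beta\circ\mu^*)$ together with $\beta\circ\mu^*\circ\alpha=[2^{p-2}]$ and $\lambda_P=2\Xi$, you compute $\deg\alpha=2^{(p-1)g}$ exactly, which exceeds $|\prod_i P(Y_i/X)[2]|=2^{2g}$ precisely when $p>3$ (and reassuringly recovers Theorem \ref{thm3.1} when $p=3$). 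The duality identity you flag is indeed the delicate point, but it does hold: $\widehat{\nu_i^*}$ and $\widehat{\Nm\mu}$ are $\Nm\nu_i$ and $\mu^*$ under the canonical principal polarizations, $\Nm\nu_i\circ\mu^*$ does land in $P(Y_i/X)$ (its composition with $\Nm\gamma_i$ lands in the finite group $(JX)^{\overline\sigma}$), and restricting $\lambda_{JY_i}$ to the Prym of an \'etale double cover contributes the factor $2$. So your argument is sound and in fact strictly stronger than the paper's, since it pins down the order of $\Ker\alpha$ rather than merely showing the containment is proper; the paper's indecomposability argument is shorter and avoids any polarization bookkeeping.
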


\begin{proof}
For the first assertion it suffices to see that $\Ker \alpha_i$ contains $P(Y_i/X)[2]$. But since $\mu^*$ is injective, $\mu$ being ramified of prime degree, it follows from diagram \eqref{d2.4} and Theorem \ref{thm2.1} that
$$
\Ker \alpha_i = \{z \in P(Y_i/X)[2^{p-2}] \;|\; \sum_{i=0}^{p-1} \sigma^i (\nu_i^* (z)) =0 \}
$$
Hence for the proof of the first assertion is suffices to show that for any $z \in P(Y_i/X)[2]$ we have
$$
\sum_{i=0}^{p-1} \sigma^i(\nu_i^*(z)) =0.
$$
This follows with the same proof as in the proof of Theorem \ref{thm3.1} for $p=3$.

Finally, if we had $\Ker \alpha = \displaystyle\prod_{i=1}^m P(Y_i/X)[2]$, the same proof as for Corollary \ref{c4.2} would provide an isomorphism of principally polarized abelian varieties $\displaystyle\prod_{i=1}^m \widehat{P(Y_i/X)} \simeq JT$.
For $p > 3$, i.e. $m >1, \; $ this contradicts the fact that the canonical polarization of $JT$ is irreducible.
\end{proof}

\section{The case $p = 2$}

Let $Y \ra X$ be an \'etale double covering of a double covering $X \ra \PP^1$. According to
Corollary  \ref{c1.2}, the composition $Y \ra \PP^1$ is Galois, with  Galois group  the Klein group
$$
G = \langle r, s \;|\; r^2 = s^2 = (rs)^2 = 1 \rangle.
$$
Denoting $Y_r := Y/\langle r \rangle$ and similarly $Y_s$ and $Y_{rs}$, we have the following diagram of double coverings,
\begin{equation}\label{d6.1}
\xymatrix{
& Y \ar[dl]_{\nu_s} \ar[dr]^{\nu_{rs}} \ar[d]^{\nu_r} &\\
X =Y_s \ar[dr]_{2:1} & Y_r \ar[d] & Y_{rs} \ar[dl]\\
& \PP^1 &\\
}
\end{equation}
We assume that $\nu_s$ is \'etale and $Y_s \ra \PP^1$ is ramified over $2\beta$ points of $\PP_1$, with $\beta \geq 3$ (so that $\dim P(Y/Y_s) >0$).
Each branch point of $Y_s \ra \PP^1$ is a branch point of exactly one of the maps $Y_r \ra \PP^1$ and
$Y_{rs} \ra \PP^1$. So if $2\beta_r$ respectively $2\beta_{rs}$ denote the number of branch points
of $Y_r \ra \PP^1$ respectively $Y_{rs} \ra \PP^1$, we have
$$
\beta = \beta_r + \beta_{rs}.
$$
The genera of the curves are:
$$
g(Y_s) = \beta -1; \quad g(Y) = 2\beta-3; \quad  g(Y_r) = \beta_r -1; \quad g(Y_{rs}) = \beta_{rs} -1.
$$
In particular, $\dim P(Y/Y_s) = g(Y_r) + g(Y_{rs})$.

\begin{prop} \label{p4.1}
The following map is an isogeny,
$$
\alpha: JY_r \times JY_{rs}  \ra P(Y/Y_s), \quad (x_1,x_2) \mapsto \nu_r^*(x_1) + \nu_{rs}^*(x_2)
$$
with kernel consisting at most of two-division points.
\end{prop}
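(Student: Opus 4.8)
The plan is to follow the pattern of Section~3. Since $\dim(JY_r\times JY_{rs})=g(Y_r)+g(Y_{rs})=\dim P(Y/Y_s)$, it will be enough to produce a homomorphism $\beta$ for which $\beta\circ\alpha$ is visibly an isogeny: then $\alpha$ has finite kernel, hence (by the dimension equality) is surjective and so an isogeny, and $\ker\alpha\subseteq\ker(\beta\circ\alpha)$ gives the control on the kernel. Throughout I use the standard identity $1+\iota=\pi^*\circ\Nm\pi$ for the quotient $\pi$ of a smooth projective curve by an involution $\iota$, together with $J\PP^1=0$.

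First I would check that $\alpha$ really lands in $P(Y/Y_s)$. The point is that for each of the three intermediate curves the involution induced from $G$ acts as $-1$ on its Jacobian. For instance, let $\bar s$ be the involution of $Y_r$ induced by $s$; since $Y_r/\langle\bar s\rangle=Y/G=\PP^1$ we get $1+\bar s=(Y_r\to\PP^1)^*\circ\Nm(Y_r\to\PP^1)=0$ on $JY_r$, because $J\PP^1=0$. Consequently $s$ acts by $-1$ on $\nu_r^*(JY_r)$, and in the same way $s$ acts by $-1$ on $\nu_{rs}^*(JY_{rs})$. Writing $P(Y/Y_s)=\ker(1+s\colon JY\to JY)^0$, and noting that $\nu_r^*(JY_r)$ and $\nu_{rs}^*(JY_{rs})$ are connected abelian subvarieties, both lie in $P(Y/Y_s)$; hence $\alpha$ is defined as stated and its image is a connected subvariety of $P(Y/Y_s)$.

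Next I would set $\beta:=(\Nm\nu_r,\Nm\nu_{rs})\colon P(Y/Y_s)\to JY_r\times JY_{rs}$ and compute $\beta\circ\alpha$. On the diagonal, $\Nm\nu_r\circ\nu_r^*$ and $\Nm\nu_{rs}\circ\nu_{rs}^*$ are multiplication by $2$, since $\nu_r$ and $\nu_{rs}$ have degree $2$. For the off-diagonal term, precompose $\Nm\nu_r\circ\nu_{rs}^*$ with $\nu_r^*$: this gives $(1+r)\circ\nu_{rs}^*=\nu_{rs}^*\circ(1+\bar r)$, where $\bar r$ is the involution of $Y_{rs}$ induced by $r$; as in the previous paragraph $Y_{rs}/\langle\bar r\rangle=\PP^1$ forces $1+\bar r=0$ on $JY_{rs}$. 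Hence $\nu_r^*\bigl(\Nm\nu_r(\nu_{rs}^*(x_2))\bigr)=0$, so $\Nm\nu_r\circ\nu_{rs}^*$ takes values in the finite group $\ker\nu_r^*$ (of order at most $2$) and, being a homomorphism out of the connected variety $JY_{rs}$, it vanishes. Symmetrically $\Nm\nu_{rs}\circ\nu_r^*=0$. Therefore $\beta\circ\alpha\colon(x_1,x_2)\mapsto(2x_1,2x_2)$ is multiplication by $2$.

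Finally, $\ker\alpha\subseteq\ker(\beta\circ\alpha)=(JY_r\times JY_{rs})[2]$ is finite, and together with the equality of dimensions this forces $\alpha$ to be an isogeny with kernel contained in the two-division points. I expect the only place needing care to be the exact — not merely up-to-isogeny — vanishing of the mixed maps $\Nm\nu_r\circ\nu_{rs}^*$ and of $1+\bar r$ on $JY_{rs}$; both come out cleanly from $1+\iota=\pi^*\circ\Nm\pi$ and $J\PP^1=0$, together with the fact that a morphism from a connected abelian variety to a finite group is trivial, so no genuine obstacle remains.
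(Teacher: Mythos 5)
Your proof is correct and follows essentially the same route as the paper: show the image lies in $P(Y/Y_s)$ via the action of $s$, then check that $(\Nm\nu_r,\Nm\nu_{rs})\circ\alpha$ is multiplication by $2$ and conclude from the dimension count. In fact you supply a step the paper leaves implicit --- the vanishing of the cross terms $\Nm\nu_r\circ\nu_{rs}^*$ and $\Nm\nu_{rs}\circ\nu_r^*$ via $1+\bar r=0$ on $JY_{rs}$ and the connectedness of $JY_{rs}$; the only slip is saying ``precompose'' with $\nu_r^*$ where you mean ``post-compose,'' but the computation you actually carry out is the correct one.
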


\begin{proof}
First we claim that $\Ima(\alpha) \subset P(Y/Y_s)$. Note first that the automorphism $s$ descends  to an automorphism $\overline s$ of $Y_r$ and we have for any $x \in JY_r$
$$
s(\nu_r^*(x)) = \nu_r^*(\overline s(x)) = - \nu_r^*(x)
$$
where the last equation follows from Proposition \ref{p2.3}.
An analogous equation is valid for $\nu_{rs}^*$. So we have
$$
(1 + s)(\alpha(x_1,x_2)) = (1+s)(\nu_r^*(x_1) + \nu_{rs}^*(x_2)) = x_1 -x_1 + x_2 -x_2 =0,
$$
which implies the assertion.

It remains to show that $\ker \alpha$
consists of $2$-division points, since
$g(Y_r)+ g(Y_{rs}) = \dim P(Y/Y_s)$. For this it suffices to show that the composed map
$$
JY_r \times JY_{rs} \stackrel{\nu_r^* + \nu_{rs}^*}{\lra} JY \stackrel{(\Nm \nu_r,\Nm \nu_{rs})}{\lra} JY_r \times JY_{rs}
$$
is multiplication by 2. But $\Nm \nu_r \circ \nu_r^* = \deg \nu_r = 2$ and the same is valid for $\nu_{rs}$. This completes the proof of the proposition.
\end{proof}

Proposition \ref{p4.1} implies
\begin{eqnarray*}
\ker \alpha & = &  \{ (x_1,x_2) \in JY_r[2] \times JY_{rs}[2] \; |\; \nu_r^*(x_1) =
\nu_{rs}^*(x_2) \} \\
& = & (\nu_r^* \times \nu_{rs}^*)^{-1} \{(x,x) \in JY \times JY \;|\; x \in \nu_r^*JY_r[2] \cap
\nu_{rs}^*JY_{rs}[2] \}
\end{eqnarray*}
Since $\nu_r$ and $\nu_{rs}$ are ramified, the homomorphisms  $\nu_r^*$ and $\nu_{rs}^*$ are
injective.
Hence we get
\begin{equation} \label{e4.1}
\deg \alpha = | \nu_r^*JY_r[2] \cap \nu_{rs}^*JY_{rs}[2] |.
\end{equation}
The following theorem is due to Mumford (see \cite[page 356]{m}).
\label{thm4.2}
\begin{thm}
Let the notation be as above. Then
we have:
\begin{enumerate}
\item[(i)] the map
$$
\alpha:  JY_r \times JY_{rs} \ra P(Y/Y_s)
$$
is an isomorphism;
\item[(ii)] the isomorphism $\alpha$ respects the canonical principal polarizations.
\end{enumerate}
\end{thm}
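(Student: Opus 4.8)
\emph{Strategy and reduction.} Both parts will be deduced from what is already in place; the only new input is the classical description of the $2$-torsion of double covers of $\PP^1$. Write $\epsilon\colon Y\ra\PP^1$ for the degree-$4$ covering, $h=\epsilon^*\mathcal O_{\PP^1}(1)$, and $B_r,B_{rs},B_s$ for the branch loci of $Y_r\ra\PP^1$, $Y_{rs}\ra\PP^1$, $Y_s\ra\PP^1$; by the discussion at the start of this section $B_r\cap B_{rs}=\emptyset$, $B_r\cup B_{rs}=B_s$, and $|B_r|=2\beta_r$, $|B_{rs}|=2\beta_{rs}$, $|B_s|=2\beta$. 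By Proposition~\ref{p4.1} the map $\alpha$ is an isogeny with kernel among the $2$-division points and, by \eqref{e4.1}, $\deg\alpha=\vert\,\nu_r^*JY_r[2]\cap\nu_{rs}^*JY_{rs}[2]\,\vert$ inside $JY[2]$; so (i) reduces to showing that this intersection is trivial, after which (ii) will be formal.

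\emph{Proof of (i).} Each of $Y_r,Y_{rs},Y_s$ is a double cover of $\PP^1$, so the $2$-torsion of its Jacobian is the group of classes $\mathcal O\big(\sum_{u\in S}e_u-\tfrac{|S|}{2}h\big)$ with $S$ an even subset of the branch locus and $e_u$ the ramification point over $u$, such a class being trivial only when $S$ is empty or the whole locus. Two observations. (1) For $b\in B_r$ the cover $Y_r\ra\PP^1$ is totally ramified over $b$ and $\nu_r$ is \'etale there, while $\nu_s$ is \'etale everywhere; hence the pullbacks to $Y$ of the ramification point of $Y_r\ra\PP^1$ over $b$ and of that of $Y_s\ra\PP^1$ over $b$ are both the reduced fibre $\epsilon^{-1}(b)$, and symmetrically for $b\in B_{rs}$. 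Since moreover $\nu_r^*h=\nu_s^*h=h$ on $Y$, we get $\nu_r^*JY_r[2]=\nu_s^*(V_r)$ and $\nu_{rs}^*JY_{rs}[2]=\nu_s^*(V_{rs})$, where $V_r,V_{rs}\subseteq JY_s[2]$ are the subgroups of classes arising from even subsets of $B_r$, respectively of $B_{rs}$. (2) The $2$-torsion point $\eta_0$ with $\ker(\nu_s^*\mid_{JY_s[2]})=\{0,\eta_0\}$ equals $\mathcal O_{Y_s}\big(\sum_{b\in B_r}e_b-\beta_r h\big)=\mathcal O_{Y_s}\big(\sum_{c\in B_{rs}}e_c-\beta_{rs}h\big)$: since $B_r\cap B_{rs}=\emptyset$ the curve $Y$ is the smooth fibre product $Y_r\times_{\PP^1}Y_{rs}$, so $\nu_{rs}$ is the pullback of the double cover $Y_r\ra\PP^1$ of $\PP^1$, whence its ramification divisor $\epsilon^{-1}(B_r)$ is linearly equivalent to $\beta_r h$; applying $\nu_s^*$ to $\mathcal O_{Y_s}(\sum_{b\in B_r}e_b-\beta_r h)$ thus yields the trivial bundle, and as this class is nonzero in $JY_s$ it must be $\eta_0$; in particular $\eta_0\in V_r\cap V_{rs}$. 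Granting (1) and (2): any element of $\nu_s^*(V_r)\cap\nu_s^*(V_{rs})$ is $\nu_s^*(v)$ with $v\in V_r$ and $v-v'\in\{0,\eta_0\}$ for some $v'\in V_{rs}$; since $\eta_0\in V_{rs}$ this forces $v\in V_r\cap V_{rs}$. A one-line count in $B_s$ gives $V_r\cap V_{rs}=\{0,\eta_0\}$: if $L_S=L_T$ with $S\subseteq B_r$ and $T\subseteq B_{rs}$ even, then $L_{S\sqcup T}=0$, so $S\sqcup T$ is $\emptyset$ or $B_s$, i.e.\ $S=T=\emptyset$ or $(S,T)=(B_r,B_{rs})$. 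Since $\nu_s^*(\eta_0)=0$, the intersection is trivial, $\deg\alpha=1$, and $\alpha$ is an isomorphism; this is (i).

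\emph{Proof of (ii).} Let $\Xi\colon P\ra\widehat P$ be the canonical principal polarization of $P:=P(Y/Y_s)$; since $\nu_s$ is \'etale, the inclusion $i\colon P\hra JY$ satisfies $\widehat i\circ\lambda_Y\circ i=2\Xi$, where $\lambda_Y$ is the canonical polarization of $JY$. Put $j:=i\circ\alpha=\nu_r^*+\nu_{rs}^*\colon JY_r\times JY_{rs}\ra JY$. Under the canonical principal polarizations the dual of $j$ is $(\Nm\nu_r,\Nm\nu_{rs})$, and by the computation in the proof of Proposition~\ref{p4.1} one has $(\Nm\nu_r,\Nm\nu_{rs})\circ j=2$. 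Hence
$$2\big(\widehat\alpha\circ\Xi\circ\alpha\big)=\widehat j\circ\lambda_Y\circ j=2\big(\lambda_{Y_r}\times\lambda_{Y_{rs}}\big),$$
and since homomorphisms of abelian varieties form a torsion-free group, $\widehat\alpha\circ\Xi\circ\alpha=\lambda_{Y_r}\times\lambda_{Y_{rs}}$. As $\alpha$ is an isomorphism by (i), this says precisely that $\alpha$ identifies the product of the canonical principal polarizations of $JY_r$ and $JY_{rs}$ with $\Xi$, which is (ii).

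\emph{Main obstacle.} The substance lies in part (i), and within it in observations (1) and (2): expressing both $\nu_r^*JY_r[2]$ and $\nu_{rs}^*JY_{rs}[2]$ as images under the single \'etale map $\nu_s^*$ of explicit subgroups of $JY_s[2]$, and identifying the Prym class $\eta_0$ of $\nu_s$ with $\mathcal O_{Y_s}(\sum_{b\in B_r}e_b-\beta_r h)$. Once this bookkeeping is set up, triviality of the intersection and part (ii) are routine.
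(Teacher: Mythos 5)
Your proof is correct, and for part (i) it follows the same basic strategy as the paper: reduce, via Proposition \ref{p4.1} and \eqref{e4.1}, to showing $\nu_r^*JY_r[2]\cap\nu_{rs}^*JY_{rs}[2]=0$, and then exploit the description of the $2$-torsion of a double cover of $\PP^1$ in terms of even subsets of the branch locus, together with the disjointness $B_r\cap B_{rs}=\emptyset$. Where you diverge is in the execution, to your advantage. The paper argues via theta characteristics pulled back to $Y$ and leaves the bookkeeping implicit; you instead push everything down to $Y_s$, writing both images as $\nu_s^*(V_r)$ and $\nu_s^*(V_{rs})$, and you explicitly handle the one genuine subtlety here, namely that $\nu_s^*$ has kernel $\{0,\eta_0\}$ so the intersection of the images could a priori exceed the image of $V_r\cap V_{rs}$; your identification $\eta_0=L_{B_r}=L_{B_{rs}}\in V_r\cap V_{rs}$ (via the fibre-product description of $Y$) closes exactly that gap, and is the cleanest point of the argument. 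For part (ii) your route is genuinely different from, and cleaner than, the paper's: you compute $\widehat{j}\circ\lambda_Y\circ j$ in two ways using that $\Nm\nu_\bullet$ is the polarization-dual of $\nu_\bullet^*$ and that $\lambda_Y$ restricts to $2\Xi$ on the Prym, then divide by $2$ using torsion-freeness of $\Hom$; the paper instead invokes a $G$-equivariance statement and (in its unfinished trailing text) a uniqueness-of-principal-polarization lemma that is never proved. Your duality computation avoids that entirely and is complete as written. Two small points worth making explicit: the identity $(\Nm\nu_r,\Nm\nu_{rs})\circ j=2$ requires not only $\Nm\nu_r\circ\nu_r^*=2$ but also the vanishing of the cross terms $\Nm\nu_r\circ\nu_{rs}^*=\mu_r^*\circ\Nm\mu_{rs}=0$ (which holds since $J\PP^1=0$ but is glossed over in Proposition \ref{p4.1}); and the nonvanishing of $\eta_0=L_{B_r}$ uses $\emptyset\neq B_r\subsetneq B_s$, which follows from connectedness of $Y_r$ and $Y_{rs}$.
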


\begin{proof} (i):
According to \eqref{e4.1} it suffices to show that the images of $JY_r[2]$ via $\nu_r^*$ and
$JY_{rs}[2]$ via $\nu_{rs}^*$ in $JY[2]$ intersect only in $0 \in JY$. Now,  fixing a theta
characteristic of JY, the 2-division points of $JY$ correspond in a natural way bijectively to the
theta characteristics of $Y$. An analogous statement is valid for $JY_r$ and $JY_{rs}$. Using this,
the assertion follows from the fact that the theta characteristics of $Y$ which are pullbacks from
theta characteristics of $Y_r$ are disjoint from those which are are pullbacks from theta characteristics
of $Y_{rs}$.

But this follows from the fact that, according to what we have said right after the diagram \ref{d6.1}, the
branch points $b_1, \dots, b_{2\beta}$ of $Y_s \ra \PP^1$ can be enumerated in such a way
that $b_1, \dots, b_{2\beta_r}$ are the branch points of $Y_r \ra \PP^1$ and that
$b_{2\beta_r +1}, \dots b_{2\beta}$ are the branch points of $Y_{rs} \ra \PP^1$.
For this, note only that all theta characteristics  of a hyperelliptic curve are sums of ramification points of the
hyperelliptic covering (see for example \cite[Section III, 5]{m1}).

(ii): From the proof of Proposition \ref{p4.1} we know that the composition
$$
JY_r \times JY_{rs} \stackrel{\alpha}{\lra} P(Y/Y_s) \stackrel{\gamma}{\lra} JY_r \times JY_{rs}
$$
with $\gamma :=(\Nm \nu_r,\Nm \nu_{rs})$,
is multiplication by 2. If $\theta:= \theta_{JY \times JY_{rs}}$ denotes the canonical polarization of
$JY_r \times JY_{rs}$, this implies that $(\gamma \circ \alpha)^{-1}(\theta) = 4 \theta$ (see
\cite[Corollary 2.3.6]{bl}). Since $\alpha$ is an isomorphism, it follows that $\gamma^{-1}(\theta)$
is the fourth power  of a principal polarization, say $ \gamma^{-1}(\theta) = 4 \Xi$.

Now $\alpha: JY_r \times JY_{rs} \ra P(Y/Y_s)$ is an isomorphism. The canonical principal
polarization of $JY$ restricts to $\nu_r^*(JY_r)$ as twice a principal one, and to
$\nu_{rs}^*(JY_{rs})$ as twice a principal one, the restriction to $P(Y/Y_s)$ is $2\Xi$. Then (ii) follows from the fact that the map $\alpha$ is $G$-equivariant, since both
varieties are the eigen-subvarieties of $-1$ for the same element of $G$, namely $\sigma$.
\end{proof}

\end{document}